\documentclass[11pt]{amsart}
\usepackage[margin=1in]{geometry}

\usepackage{amssymb}
\usepackage{amsthm}
\usepackage{amsmath}
\usepackage{mathrsfs}
\usepackage{amsbsy}
\usepackage[all]{xy}
\usepackage{bm}
\usepackage{hyperref}
\usepackage{tikz}
\usepackage{array}
\usepackage{float}
\usepackage{enumerate}
\usepackage{xcolor}
\usepackage{hhline}
\usepackage{mathtools}
\allowdisplaybreaks
\usepackage[noadjust]{cite}

\usepackage{caption}
\usepackage{subcaption}
\usepackage{diagbox}
\usepackage{tikz}
\usepackage{bbm}
\usepackage{booktabs}

\usepackage[noabbrev,capitalise]{cleveref}

\newtheorem{thm}{Theorem}[section]
\newtheorem{lem}[thm]{Lemma}
\newtheorem{cor}[thm]{Corollary}
\newtheorem{prop}[thm]{Proposition}

\newtheorem{rem}[thm]{Remark}
\newtheorem*{thm*}{Theorem \ref{th:main3}}
\newtheorem*{thm*2}{Theorem \ref{th:main1}}

\theoremstyle{definition}
\newtheorem{defn}[thm]{Definition}

\begin{document}
\title[]{Anticoncentration of Random Sums in $\mathbb{Z}_p$}
\author[]{Simone Costa}
\address[Simone Costa]{DICATAM, Universit\`a degli Studi di Brescia, Via Branze~43, I~25123 Brescia, Italy}
\email{simone.costa@unibs.it}
\subjclass[2010]{11B75, 60G50, 05D40}
\keywords{Discrete Fourier Transform, Anticoncentration Inequalities, Probabilistic Methods}

\maketitle
\begin{abstract}
In this paper we investigate the probability distribution of the sum $Y$ of $\ell$ independent identically distributed random variables taking values in $\mathbb{Z}_p$. Our main focus is the regime of small values of $\ell$, which is less explored compared to the asymptotic case $\ell \to \infty$.

Starting with the case $\ell=3$, we prove that if the distributions of the $Y_i$ are uniformly bounded by $\lambda < 1$ and $p > 2/\lambda$, then there exists a constant $C_{3,\lambda} < 1$ such that
\[
\max_{x \in \mathbb{Z}_p} \mathbb{P}[Y = x] \leq C_{3,\lambda}\lambda.
\]
Moreover, when the distributions are uniformly separated from $1$, the constant $C_{3,\lambda}$ can be made explicit. By iterating this argument, we obtain effective anticoncentration bounds for larger values of $\ell$, yielding nontrivial estimates already in small and moderate regimes where asymptotic results do not apply.
\end{abstract}

\section{Introduction}

The Littlewood–Offord problem is a classical question in probabilistic combinatorics concerning the anticoncentration of sums of independent random variables. It was introduced by Littlewood and Offord in the 1940s (see \cite{LO}) in order to study the distribution of sums of random variables with restricted supports. In its original formulation, given a list of (not necessarily distinct) integers, the problem asks for upper bounds on the probability of obtaining a prescribed value by summing elements from the list. Equivalently, given integers $(v_1, v_2, \dots, v_{\ell})$, one studies the distribution of $\sum_{i=1}^{\ell} Y_i$, where each $Y_i$ is uniformly distributed on $\{0,v_i\}$ (or, in an equivalent formulation, on $\{-v_i,v_i\}$). In \cite{LO}, Littlewood and Offord proved an upper bound of order $O(\frac{\log n}{\sqrt{n}})$, which was later improved by Erd\H{o}s to $\frac{1}{\sqrt{n}}(1+o(1))$ in \cite{E}.

The problem and its variants arise in the analysis of random walks, random matrices, and other combinatorial structures (see for instance \cite{B,TV2}). Significant progress has been made under various structural assumptions on the distributions of the variables (see \cite{JK,LR}), as well as in settings involving finite groups. In particular, Vaughan and Wooley \cite{VW} considered the case where the variables $Y_i$ are uniform on $\{0,v_i\}$ in cyclic groups, and further bounds were obtained by Griggs \cite{G}, Bibak \cite{B}, and Juskevicius and Semetulskis \cite{JS}. In the latter work, the authors also investigated the case where the distributions of the $Y_i$ are uniformly bounded by $1/2$.

The present work is inspired by these developments. We consider independent random variables $Y_1,\dots,Y_{\ell}$ taking values in a cyclic group $\mathbb{Z}_k$, whose distributions are pointwise bounded by $\lambda$.

If the distributions are uniform on a subset $A\subseteq \mathbb{Z}_k$ with $|A|=n$, the problem is closely related to estimating the probability that a subset $X\subseteq A$ of fixed cardinality $\ell$ has sum equal to $x$. This can be viewed as a variation of the classical Littlewood–Offord problem in which the size of the subset is fixed and the elements $v_1,\dots,v_n$ are distinct. The case of distinct integers was studied by Erd\H{o}s and later by Halász in \cite{H} (see also \cite{TV1}), where bounds of order $O(\frac{1}{n\sqrt{n}})$ were obtained; very recent results have been obtained for this problem also in $\mathbb{Z}_p$ by Pham and Sauermann (see \cite{PM}).

These questions are also motivated by applications to the set sequenceability problem (see \cite{HOS19,CDOR} and \cite{PM}), where one needs anticoncentration estimates in finite cyclic groups in regimes where both $p$ and $\ell$ are sufficiently large. In this context, it was brought to our attention that Lev established very strong asymptotic results in \cite{L1,L2}. In particular, in \cite{L2} he proved that in $\mathbb{Z}_p$ one has
$$
\mathbb{P}[Y=x]\leq \frac{1}{p}+\frac{1}{n}\sqrt{\frac{8}{\pi\ell}}\left(1+2\ell^{-1/2}+(3/4)^{\ell/2+3}\ell^{3/2}\right).
$$
Thus, for $p$ and $\ell$ sufficiently large, one recovers the same qualitative behaviour as in the integer setting.

However, a direct inspection shows that for $\ell<24$ the above estimate does not improve upon the trivial bound. The aim of this paper is to address precisely this non-asymptotic regime. We will moreover treat distributions that are not necessarily uniform.

The paper is organized as follows. In Section~2 we revisit the integer case. We present a direct and self-contained proof of a bound that appears in \cite{JK} in a more general framework, and we make the constants explicit, improving in particular the case $\ell=3$. Denoting by $n$ the cardinality of the support, we show that there exists an absolute constant $D$ such that
$$
\max_{x\in \mathbb{Z}} \mathbb{P}[Y=x]\leq\frac{D}{n\sqrt{\ell-1}}.
$$
In particular, for every $\epsilon>0$, if $\ell$ is sufficiently large with respect to $\epsilon$, then
$$
\max_{x\in \mathbb{Z}} \mathbb{P}[Y=x]\leq\frac{\epsilon}{n}.
$$
Moreover, in the case $\ell=3$, we obtain the explicit bound $\epsilon=\frac{3+1/n^2}{4}$, which is already non-trivial.

We then turn to cyclic groups. When the prime factors of $k$ are sufficiently large, a Freiman isomorphism of order $\ell$ allows us to transfer the integer bound to $\mathbb{Z}_k$. To avoid assumptions on the prime factors of $k$, one may alternatively use Lev’s results.

Our main contribution is contained in Section~3, where we obtain non-trivial anticoncentration bounds for every $\ell\geq 3$. More precisely, if $p>\frac{2}{\lambda}\left(\frac{\ell_0}{3}\right)^{\nu}$ (where $\ell_0$ is a power of three such that $\ell_0\leq \ell$), and if $Y=Y_1+\dots+Y_{\ell}$ with $Y_i$ i.i.d.\ and distributions uniformly bounded by $\lambda\leq 9/10$ (this restriction is only needed to provide an explicit value of $\nu$), then there exists an absolute constant $\nu>0$ such that
$$
\max_{x\in \mathbb{Z}_p} \mathbb{P}[Y=x]\leq\lambda\left(\frac{3}{\ell_0}\right)^{\nu}.
$$

In summary, while Lev’s asymptotic methods dominate in the large-$\ell$ regime,
our approach provides explicit and self-contained bounds for small and moderate
values of $\ell$, and applies also to distributions that are not necessarily uniform,
where asymptotic estimates are not yet effective.

\section{Known inequalities}
In this section, given a set $A=\{v_1,v_2,\dots,v_n\}$ of distinct integers, we consider the random variable $Y$ given by the sum of $\ell$ independent variables $Y_1,\dots, Y_{\ell}$ that are uniformly distributed on $A$. We establish an upper bound on $\mathbb{P}[Y=x]$ corresponding to a special case of Theorem~2.3 in \cite{JK}. Although this follows from their more general result, we provide a direct and self-contained proof, which we believe may be of independent interest.

First of all we restrict ourself to consider $A=\{-(n-1)/2,\dots,(n-1)/2\}$, indeed Theorem 2 of \cite{LR} state that:
\begin{thm}[Leader and Radcliffe]\label{LRthm} Denoted by $\tilde{Y}$ the random variable given by the sum of $\ell$ independent and uniformly distributed on $\{-(n-1)/2,\dots,(n-1)/2\}$ variables $\tilde{Y}_1,\dots, \tilde{Y}_{\ell}$ and considered $Y$ defined as above, we have that
$$\left(\max_{x\in \mathbb{Z}} \mathbb{P}[Y=x]\right)\leq\left(\max_{x\in \mathbb{Z}} \mathbb{P}[\tilde{Y}=x]\right)=\mathbb{P}[\tilde{Y}=M]$$
where $M=\begin{cases}0 \mbox{ if } \ell\equiv 0 \pmod{2} \mbox{ or }n\equiv 1 \pmod{2};\\
-\frac{1}{2} \mbox{ otherwise}.\end{cases}$
\end{thm}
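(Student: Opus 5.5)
The proof splits into two independent claims. The first is the comparison $\max_x \mathbb{P}[Y=x]\le \max_x\mathbb{P}[\tilde Y=x]$ for an arbitrary set $A$ of $n$ distinct integers. The second identifies where the maximum of $\tilde Y$ sits: $\tilde Y$ is maximized at the central point $M$.

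\emph{Location of the maximum for $\tilde Y$.} The law of $\tilde Y_1$ is uniform on an arithmetic progression symmetric about $0$. Such a law is symmetric and unimodal: its mass function is constant on a centred block and zero outside. The convolution of symmetric unimodal distributions on a lattice is again symmetric and unimodal, and this is the classical fact I would prove first. The cleanest route writes each such distribution as a mixture of uniform distributions on centred intervals of the lattice $\tfrac12\mathbb{Z}$ with the right parity. One then checks directly that the convolution of two uniform centred-interval distributions is a symmetric ``trapezoid'', which is unimodal. Mixtures of symmetric unimodal laws with the same centre are symmetric unimodal, so induction on $\ell$ gives the claim. Hence $\mathbb{P}[\tilde Y=x]$ is maximized at the point(s) of the support closest to $0$. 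The support is $\{-\ell(n-1)/2,\dots,\ell(n-1)/2\}$, which consists of integers when $\ell(n-1)$ is even, that is, when $\ell$ is even or $n$ is odd. In that case $M=0$. Otherwise the support lies in $\tfrac12+\mathbb{Z}$, the two central points $\pm\tfrac12$ carry equal mass by symmetry, and we may take $M=-\tfrac12$.

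\emph{Comparison for general $A$.} Write $\mathbb{P}[Y=x]=n^{-\ell}\,\#\{(a_1,\dots,a_\ell)\in A^\ell:\sum a_i=x\}$. Also $\max_x\#\{\cdot\}=\max_x(\mathbf 1_A^{*\ell})(x)$, and $\mathbf 1_A^{*\ell}(x)=\mathbf 1_A^{*(\ell-1)}\ast\mathbf 1_A(x)$. My plan is a rearrangement argument, Hardy–Littlewood–Riesz style, in $\mathbb{Z}$. Let $f^{*}$ denote the symmetric decreasing rearrangement of a nonnegative function on $\mathbb{Z}$, recentred on $\tfrac12\mathbb{Z}$ as needed. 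The discrete Riesz–Sobolev inequality (Gabriel / Hardy–Littlewood for sequences) states
\[
\sum_x f(x)\,(g\ast h)(x)\le \sum_x f^{*}(x)\,(g^{*}\ast h^{*})(x).
\]
Applying it iteratively with $f=\mathbf 1_{\{x\}}$ and $g,h$ built from $\mathbf 1_A$ gives the bound. Note $\mathbf 1_A^{*}=\mathbf 1_{\{-(n-1)/2,\dots,(n-1)/2\}}$. By the induction above, $(\mathbf 1_A^{*})^{*(\ell-1)}$ is already symmetric decreasing, so $\mathbf 1_A^{*(\ell-1)}$ and its rearrangement can be compared step by step. Concretely, I would prove by induction on $\ell$ that for every $k$, the sum of the $k$ largest values of $\mathbf 1_A^{*\ell}$ is at most the sum of the $k$ largest values of $\mathbf 1_{\tilde A}^{*\ell}$. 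This is a majorization statement, and it is stable under convolution with $\mathbf 1_A$ by the rearrangement inequality. Taking $k=1$ yields the theorem.

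\emph{Main obstacle.} The hard part is the discrete rearrangement inequality on $\mathbb{Z}$, with its parity and half-integer centring issues. In the continuous setting it is standard, but on $\mathbb{Z}$ one must handle ties and alternating centres, which is exactly why $M$ depends on parity. I would try first a compression argument. Show that replacing $A$ by a set closer to an interval, for instance by repeatedly moving an extreme element inward into a gap, does not decrease $\max_x|\{(a_i)\in A^\ell:\sum a_i=x\}|$. This can be checked using a shifting/injection on $\ell$-tuples, and the process terminates at an interval, which is a translate of $\tilde A$. A translate only shifts $Y$ by a constant, so the maximum is unchanged and the comparison follows.
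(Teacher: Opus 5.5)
Your analysis of $\tilde Y$ is correct: centred uniforms are symmetric unimodal, convolution preserves this, so the maximum sits at $0$ or at $\pm\frac12$. Reducing the comparison to the majorization statement $S_k(\mathbf 1_A^{*\ell})\le S_k(\mathbf 1_I^{*\ell})$ for all $k$ is also the right skeleton; here $S_k$ is the sum of the $k$ largest values and $I$ is an interval of length $n$. The paper itself gives no argument: it imports the statement from Leader--Radcliffe.

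\textbf{The gap.} The step you call the hard part is not established.
\begin{enumerate}
\item[(i)] As written, the inequality $\sum_x f(x)(g*h)(x)\le\sum_x f^{*}(x)(g^{*}*h^{*})(x)$ is false on $\mathbb{Z}$. Take $f=\mathbf 1_{\{0,1\}}$ and $g=h=\mathbf 1_{\{0\}}$, with every rearrangement symmetric about $0$. The left side is $1$ and the right side is $f^{*}(0)=0$.
\item[(ii)] A general nonnegative function, say one taking the values $3,2,1$ once each, has no symmetric decreasing rearrangement on a lattice at all.
\item[(iii)] More seriously, the compression you propose to prove it fails: moving an extreme element inward into a gap can strictly decrease the maximum. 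For $A=\{0,2,4\}$ and $\ell=3$ the counts are the coefficients of $(1+x^2+x^4)^3$, and the largest is $7$. Moving $4$ to $3$ gives $(1+x^2+x^3)^3=1+3x^2+3x^3+3x^4+6x^5+4x^6+3x^7+3x^8+x^9$, whose largest coefficient is $6$. Arithmetic progressions of every difference are already extremal, so closeness to an interval is not a monotone potential. You give no rule for choosing the move that would avoid this.
\end{enumerate}

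\textbf{A repair using only sets.} Fix $B$ with $|B|=k$. Put $g=\mathbf 1_A^{*(\ell-1)}$, $\tilde g=\mathbf 1_I^{*(\ell-1)}$, $r=\mathbf 1_B*\mathbf 1_{-A}$, and let $w=\mathbf 1_{I_k}*\mathbf 1_{-I}$ be the trapezoid. Then $\sum_{x\in B}(g*\mathbf 1_A)(x)=\sum_y g(y)r(y)\le\sum_i g^{\downarrow}_i r^{\downarrow}_i$.

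The needed input is $S_j(r)\le S_j(w)$ for all $j$. We have $\sum r=\sum w=kn$ and $S_j(r)=\min_{t\in\mathbb{Z}_{\ge 0}}\bigl(jt+\sum_y(r(y)-t)_+\bigr)$. So it follows from $\sum_y\min(t,r(y))\ge t(k+n-t)$ for $1\le t\le\min(k,n)$. This is Pollard's theorem over $\mathbb{Z}$, with equality for intervals.

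Abel summation against the nonincreasing $g^{\downarrow}$ then gives $\sum_i g^{\downarrow}_i r^{\downarrow}_i\le\sum_i g^{\downarrow}_i w^{\downarrow}_i$. Abel summation against $w^{\downarrow}$, with the inductive hypothesis $S_j(g)\le S_j(\tilde g)$, bounds this by $\sum_i\tilde g^{\downarrow}_i w^{\downarrow}_i$.

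Finally, $\tilde g$ and $w$ are both symmetric unimodal. After an integer translation of $w$ their centres differ by at most $\frac12$, and then a single ordering of $\mathbb{Z}$ makes both nonincreasing. Hence the last sum equals $\sum_{x\in B'}(\tilde g*\mathbf 1_I)(x)\le S_k(\mathbf 1_I^{*\ell})$ for an interval $B'$ of length $k$. This is where the parity issues are absorbed.

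With this in place your route is a self-contained proof for uniform summands. The cited Leader--Radcliffe theorem is broader: it bounds the concentration function of sums of arbitrary independent summands of bounded concentration.
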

Then our main tool to upper-bound $\mathbb{P}[\tilde{Y}=x]$ is the Berry-Esseen theorem which says that:
\begin{thm}[Berry-Esseen]\label{BE}
Let $Y_1,\dots, Y_{\ell'}$ be independent random variables with the same distribution and such that:
\begin{itemize}
\item[(a)] $\mathbb{E}(Y_j)=0$;
\item[(b)] $\mathbb{E}(Y_j^2)=\sigma^2>0$;
\item[(c)] $\mathbb{E}(|Y_j|^3)<\infty$.
\end{itemize}
Then, set $\rho=\frac{\mathbb{E}(|Y_j|^3)}{\sigma^3}$ we have

$$\sup_x \left| \mathbb{P}[\frac{1}{\sigma\sqrt{\ell'}}\sum_{j=1}^{\ell'} Y_j\leq x]-\Phi(x) \right|\leq C\frac{\rho}{\sqrt{\ell'}} $$
where $\Phi(x)$ is the cumulative distribution function of the standard normal distribution and $C\leq 0.4748$ (see \cite{S}) is an absolute constant.
\end{thm}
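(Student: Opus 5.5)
Since the statement is the classical Berry--Esseen theorem, and the numerical value $C\le 0.4748$ is taken from \cite{S}, I would not try to recover that sharp constant. I would only prove the inequality with \emph{some} absolute constant $C$, using the Fourier-analytic route through Esseen's smoothing inequality.

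First I normalise. Replacing $Y_j$ by $Y_j/\sigma$ we may assume $\sigma=1$, so that $\mathbb{E}|Y_j|^3=\rho$. By Lyapunov's inequality, $\rho\ge(\mathbb{E}Y_j^2)^{3/2}=1$. Let $\varphi$ be the characteristic function of $Y_j$. Let $F_{\ell'}$ be the distribution function of $S=\ell'^{-1/2}\sum_j Y_j$, whose characteristic function is $\varphi(t/\sqrt{\ell'})^{\ell'}$. The key tool is Esseen's smoothing lemma. For any distribution function $F$ and any differentiable $G$ with $G(-\infty)=0$, $G(\infty)=1$ and $|G'|\le m$, it gives
\[
\sup_x|F(x)-G(x)|\le \frac1\pi\int_{-T}^{T}\left|\frac{\hat F(t)-\hat G(t)}{t}\right|dt+\frac{24m}{\pi T}.
\]
I would prove it by convolving $F-G$ with a Fej\'er-type kernel whose Fourier transform is supported in $[-T,T]$, and then undoing the smoothing using the Lipschitz bound on $G$. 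We apply it with $G=\Phi$, $m=1/\sqrt{2\pi}$ and $T=c\sqrt{\ell'}/\rho$ for a small absolute constant $c$. The boundary term is then $O(\rho/\sqrt{\ell'})$.

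Second, I estimate the integrand for $|t|\le T$. Taylor expansion with the third-moment remainder gives
\[
\left|\varphi(s)-1+\tfrac{s^2}{2}\right|\le \tfrac{\rho|s|^3}{6}.
\]
Hence $|\varphi(s)|\le 1-\tfrac{s^2}{2}+\tfrac{\rho|s|^3}{6}\le e^{-s^2/3}$ whenever $|s|\le 1/\rho$. With $s=t/\sqrt{\ell'}$ and $c\le 1$, this covers the whole range $|t|\le T$. Next I use the telescoping bound
\[
|a^{\ell'}-b^{\ell'}|\le \ell'|a-b|\max(|a|,|b|)^{\ell'-1},
\]
with $a=\varphi(t/\sqrt{\ell'})$ and $b=e^{-t^2/(2\ell')}$. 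This gives
\[
\left|\varphi(t/\sqrt{\ell'})^{\ell'}-e^{-t^2/2}\right|\le C_1\frac{\rho|t|^3}{\sqrt{\ell'}}e^{-t^2/4}.
\]
Here I use that $|a-b|\le |a-1+s^2/2|+|b-1+s^2/2|\le C\rho|s|^3$, since $s^4\le \rho|s|^3$ in the range, and that $(\ell'-1)/\ell'\ge 1/2$ for $\ell'\ge2$; the case $\ell'=1$ is trivial because $\rho\ge1$. Dividing by $|t|$ and integrating $t^2e^{-t^2/4}$ over $\mathbb{R}$ gives $O(\rho/\sqrt{\ell'})$. Combined with the smoothing term, this yields the claim.

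The main obstacle is the second step: obtaining a uniform Gaussian-type decay of $|\varphi(t/\sqrt{\ell'})|^{\ell'-1}$ on the entire window $|t|\le T$, not just for bounded $t$. This is exactly what forces $T$ to be proportional to $\sqrt{\ell'}/\rho$ rather than larger. I would handle it as above, with the elementary inequality $1-u\le e^{-u}$ applied to $u=\tfrac{s^2}{2}-\tfrac{\rho|s|^3}{6}\ge \tfrac{s^2}{3}$. The sharp constant $0.4748$ needs much finer kernels and numerical optimisation, so for that I would defer to \cite{S}.
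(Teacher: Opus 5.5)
The paper gives no proof of this statement. It quotes the classical Berry--Esseen theorem as a black box and takes the numerical constant $C\le 0.4748$ from \cite{S}. Your proposal instead gives a self-contained proof, via Esseen's smoothing lemma, of the inequality with some absolute constant. That argument is correct. After normalising to $\sigma=1$ and choosing $T=c\sqrt{\ell'}/\rho$ with $c\le 1$, every $s=t/\sqrt{\ell'}$ in the window satisfies $|s|\le 1/\rho\le 1$. This gives three facts:
\begin{itemize}
\item $1-s^2/2\ge 0$;
\item $\rho|s|^3/6\le s^2/6$, hence $|\varphi(s)|\le 1-s^2/3\le e^{-s^2/3}$;
\item $s^4\le |s|^3/\rho\le \rho|s|^3$, which controls the Taylor remainder of $e^{-s^2/2}$.
\end{itemize}
So the telescoping estimate goes through, and both the integral and the boundary term $24m/(\pi T)$ are $O(\rho/\sqrt{\ell'})$.

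There is one small arithmetic slip. From $\max(|a|,|b|)^{\ell'-1}\le e^{-(\ell'-1)t^2/(3\ell')}$ and $(\ell'-1)/\ell'\ge 1/2$ you get $e^{-t^2/6}$, not $e^{-t^2/4}$; the latter needs $\ell'\ge 4$. This is harmless, since any Gaussian factor makes $\int t^2 e^{-\alpha t^2}\,dt$ finite.

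The two routes buy different things. Yours is elementary and self-contained but yields an unspecified absolute constant. The citation supplies the explicit value $0.4748$, which needs Shevtsova's refined smoothing kernels and numerical optimisation, exactly as you say. For this paper your weaker version actually suffices. Theorem \ref{BE} is used only in the proof of Theorem \ref{Z,ell,epsilon}, and that theorem only asserts the existence of an absolute constant $D$, with the $o(1)$ terms absorbed into it. The numerical value of $C$ is therefore never needed downstream.
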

Then we can state the following:
\begin{thm}\label{Z,ell,epsilon}
Let $A=\{v_1,v_2,\dots,v_n\}$ be distinct integers and let $Y$ be the sum of $\ell$ independent and uniformly distributed on $A$ variables $Y_1,\dots, Y_{\ell}$.
Then, there exists an absolute constant $D$ for which $$\left(\max_{x\in \mathbb{Z}} \mathbb{P}[Y=x]\right)\leq\frac{D}{n\sqrt{\ell-1}}.$$
In particular, however we take $\epsilon$, if $\ell$ is sufficiently large with respect to $\epsilon$ we have that
$$\left(\max_{x\in \mathbb{Z}} \mathbb{P}[Y=x]\right)\leq\frac{\epsilon}{n}.$$
\end{thm}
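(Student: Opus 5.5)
By Theorem~\ref{LRthm} it is enough to bound $\mathbb{P}[\tilde{Y}=M]$, where $\tilde{Y}$ is the sum of $\ell$ independent variables uniform on the symmetric interval $I=\{-(n-1)/2,\dots,(n-1)/2\}$. Applying Berry--Esseen directly to $\tilde Y$ only controls intervals of width about $\sigma\sqrt{\ell}$ up to an error $O(1/\sqrt{\ell})$, which is too coarse to isolate a single point. I will therefore condition on one summand and apply Berry--Esseen to the other $\ell'=\ell-1$; this explains the factor $\sqrt{\ell-1}$ in the statement.

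Write $\tilde Y=\tilde Y_1+S$ with $S=\tilde Y_2+\dots+\tilde Y_\ell$. Then
\[
\mathbb{P}[\tilde Y=M]=\frac1n\sum_{a\in I}\mathbb{P}[S=M-a]=\frac1n\,\mathbb{P}[S\in M-I].
\]
The set $M-I$ is a window of $n$ consecutive integers (or half-integers) centered near $0$. Hence it suffices to show that
\[
\mathbb{P}[S\in J]\le \frac{D}{\sqrt{\ell-1}}
\]
for every interval $J$ of length $n-1$. This reduction is the key step: it converts a point probability into an interval probability, which is exactly what Berry--Esseen handles.

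Apply Theorem~\ref{BE} to $\tilde Y_2,\dots,\tilde Y_\ell$. These have mean $0$ and variance
\[
\sigma^2=\frac{n^2-1}{12},
\]
and their normalized third moment is $\rho=\mathbb{E}|\tilde Y_j|^3/\sigma^3$. A direct computation gives $\mathbb{E}|\tilde Y_j|^3\approx n^3/32$, so $\rho\le \rho_0$ for an absolute constant $\rho_0$ (about $1.3$ for large $n$). For an interval $J=[u,u+n-1]$ we have
\[
\mathbb{P}[S\in J]\le \Phi\!\left(\frac{u+n-1}{\sigma\sqrt{\ell-1}}\right)-\Phi\!\left(\frac{u^-}{\sigma\sqrt{\ell-1}}\right)+\frac{2C\rho}{\sqrt{\ell-1}},
\]
with the left endpoint handled by a limit from the left. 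The Gaussian increment is at most
\[
\frac{n-1}{\sigma\sqrt{2\pi(\ell-1)}}\le \frac{\sqrt{12}}{\sqrt{2\pi}\sqrt{\ell-1}}.
\]
Combining the two terms gives
\[
D=\sqrt{6/\pi}+2C\rho_0 .
\]

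The main obstacle is the small-$n$ regime, where $\sigma$ and $\rho$ are not close to their asymptotic values. For $n=1$ the variance vanishes, but then the bound is trivial once $D\ge 1$. For $n\ge 2$, $\sigma\ge 1/2$, and both $(n-1)/\sigma$ and $\rho$ are uniformly bounded, so a crude uniform bound suffices for the existence of $D$. The second claim of the theorem then follows at once: choose $\ell$ with $D/\sqrt{\ell-1}\le\epsilon$.
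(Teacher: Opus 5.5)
Your proposal is correct and follows the paper's proof essentially step by step. You reduce to the symmetric interval via Leader--Radcliffe and condition on one summand, so the point probability becomes $\frac1n$ times the probability that the other $\ell-1$ summands land in a window of $n$ consecutive points. You then bound that window by two Berry--Esseen error terms plus the Gaussian increment. Your bookkeeping is in fact cleaner than the paper's: $\max\phi=1/\sqrt{2\pi}$, $(n-1)/\sigma\le\sqrt{12}$, and the uniform bound $\rho\le\sqrt3$ you need follows at once from $\mathbb{E}|\tilde Y_j|^3\le\frac{n-1}{2}\sigma^2$.

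The one thing to correct is your remark on $n=1$. There $\mathbb{P}[Y=\ell v_1]=1>D/\sqrt{\ell-1}$ as soon as $\ell>D^2+1$, so the statement is false for $n=1$. You must assume $n\ge 2$, as the paper tacitly does when it applies Berry--Esseen with $\sigma>0$.
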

\proof
Due to Theorem \ref{LRthm}, we can assume $A=\{-(n-1)/2,\dots,(n-1)/2\}$.
We note that, due to the independence of the $Y_i$,
$$\mathbb{P}[Y=x]=$$
\begin{equation}\label{riduzione}\sum_{y\in [x-\frac{n-1}{2},x+\frac{n-1}{2}]}\mathbb{P}[Y_1+\dots+Y_{\ell-1}=y]\mathbb{P}[Y_{\ell}=x-y]=\frac{\mathbb{P}[Y_1+\dots+Y_{\ell-1}\in [x-\frac{n-1}{2},x+\frac{n-1}{2}]]}{n}.\end{equation}
Now we want to estimate $\mathbb{P}[Y_1+\dots+Y_{\ell-1}\in [x-\frac{n-1}{2},x+\frac{n-1}{2}]]$ through Theorem \ref{BE}.
Indeed, set
$$\Psi(z)=\mathbb{P}[\frac{1}{\sigma\sqrt{\ell-1}}\sum_{j=1}^{\ell-1} Y_j\leq z]$$
and set $(\Psi-\Phi)(z):=\Psi(z)-\Phi(z)$, we have:
$$\mathbb{P}[Y_1+\dots+Y_{\ell-1}\in [x-\frac{n-1}{2},x+\frac{n-1}{2}]]=\Psi\left(\frac{(x+\frac{n-1}{2})}{\sigma\sqrt{\ell-1}}\right)-\Psi\left(\frac{(x-\frac{n-3}{2})}{\sigma\sqrt{\ell-1}}\right)$$
\begin{equation}\label{Spezzata1}=\left(\Psi-\Phi\right)\left(\frac{(x+\frac{n-1}{2})}{\sigma\sqrt{\ell-1}}\right)-\left(\Psi-\Phi\right)\left(\frac{(x-\frac{n-3}{2})}{\sigma\sqrt{\ell-1}}\right)+\end{equation}
$$\Phi\left(\frac{(x+\frac{n-1}{2})}{\sigma\sqrt{\ell-1}}\right)-\Phi\left(\frac{(x-\frac{n-3}{2})}{\sigma\sqrt{\ell-1}}\right).$$
This implies that, due to the triangular inequality,
$$\mathbb{P}[Y_1+\dots+Y_{\ell-1}\in [x-\frac{n-1}{2},x+\frac{n-1}{2}]]\leq $$
\begin{equation}\label{Spezzata2}\left|\left(\Psi-\Phi\right)\left(\frac{(x+\frac{n-1}{2})}{\sigma\sqrt{\ell-1}}\right)\right|+\left|\left(\Psi-\Phi\right)\left(\frac{(x-\frac{n-3}{2})}{\sigma\sqrt{\ell-1}}\right)\right|+\end{equation}
$$\left|\Phi\left(\frac{(x+\frac{n-1}{2})}{\sigma\sqrt{\ell-1}}\right)-\Phi\left(\frac{(x-\frac{n-3}{2})}{\sigma\sqrt{\ell-1}}\right)\right|.$$
We note that the first two terms of Equation \eqref{Spezzata2} can be upper bounded via the Berry-Esseen theorem.
Here we have that
$$\sigma=\frac{n}{2\sqrt{3}}(1+o(1))$$
and
$$\rho=\frac{3\sqrt{3}}{4}(1+o(1)).$$
Therefore
\begin{equation}\label{primidue}\left|\left(\Psi-\Phi\right)\left(\frac{(x+\frac{n-1}{2})}{\sigma\sqrt{\ell-1}}\right)\right|+\left|\left(\Psi-\Phi\right)\left(\frac{(x-\frac{n-3}{2})}{\sigma\sqrt{\ell-1}}\right)\right|\leq 2C\frac{3\sqrt{3}}{4\sqrt{\ell-1}}(1+o(1)).\end{equation}
For the latter term, we note that
$$\left|\Phi\left(\frac{(x+\frac{n-1}{2})}{\sigma\sqrt{\ell-1}}\right)-\Phi\left(\frac{(x-\frac{n-3}{2})}{\sigma\sqrt{\ell-1}}\right)\right|\leq\max_x |\phi(x)| \frac{n}{\sigma\sqrt{\ell-1}}.$$
Hence, since the density function $\phi(\cdot)$ of the standard normal distribution has a maximum $\frac{1}{2\pi}$ in $0$, we have
\begin{equation}\label{terzotermine}
\left|\Phi\left(\frac{(x+\frac{n-1}{2})}{\sigma\sqrt{\ell-1}}\right)-\Phi\left(\frac{(x-\frac{n-3}{2})}{\sigma\sqrt{\ell-1}}\right)\right|\leq \frac{1}{2\pi}\frac{2\sqrt{3}}{\sqrt{l-1}}(1+o(1)).
\end{equation}
Summing up Equations \eqref{primidue} and \eqref{terzotermine}, we have that
$$\mathbb{P}[Y_1+\dots+Y_{\ell-1}\in [x-\frac{n-1}{2},x+\frac{n-1}{2}]]\leq \frac{D}{\sqrt{\ell-1}}$$
where $D$ is an absolute constant that also incorporates the term $o(1)$ which is bounded and goes to zero when $n$ goes to infinite.
Placing this bound in Equation \eqref{riduzione}, we obtain that
$$\mathbb{P}[Y=x]\leq \frac{D}{n\sqrt{\ell-1}}$$
which implies the thesis.
\endproof
Even though for $\ell=3$ the previous theorem provides a trivial bound, we can deal with this case directly. More precisely, we obtain the following non-trivial bound:
\begin{thm}\label{Z,3,epsilon}
Let $A=\{v_1,v_2,\dots,v_n\}$ be distinct integers, and let $Y_1, Y_2, Y_3$ be independent and uniformly distributed on $A$. Then, set $Y=Y_1+Y_2+Y_3$, we have that
$$\left(\max_{x\in \mathbb{Z}} \mathbb{P}[Y=x]\right)\leq\frac{3+1/n^2}{4n}.$$
\end{thm}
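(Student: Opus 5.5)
By Theorem \ref{LRthm} (Leader and Radcliffe) it is enough to treat $A=\{-(n-1)/2,\dots,(n-1)/2\}$ and the point $x=M$. Translating by $(n-1)/2$ in each coordinate, this is the same as taking $A=\{0,1,\dots,n-1\}$ and the central value $s_0=\lfloor 3(n-1)/2\rfloor$ (or the corresponding central value). So
$$\max_{x}\mathbb{P}[Y=x]=\frac{N(s_0)}{n^3},$$
where $N(s)=\#\{(a,b,c)\in\{0,\dots,n-1\}^3: a+b+c=s\}$. The statement is then equivalent to the counting inequality
$$N(s)\le \frac{3n^2+1}{4}\qquad\text{for all } s.$$
Small cases support this: $n=1$ gives equality, $n=2$ gives $N=3<13/4$, and $n=3$ gives $N(3)=7=28/4$.

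To compute $N$ I would use the same one-step conditioning as in \eqref{riduzione}. Let $T(t)=\#\{(b,c):b+c=t\}$. This is the triangle function $T(t)=\min(t+1,\,2n-1-t)$ for $0\le t\le 2n-2$, and $T(t)=0$ otherwise. Then
$$N(s)=\sum_{a=0}^{n-1}T(s-a),$$
which is a sum of $T$ over a window of $n$ consecutive integers. Since $T$ is symmetric and unimodal about $n-1$, such a window sum is maximized when the window is centered at $n-1$. This is the discrete analogue of the Leader--Radcliffe statement, and it also follows from the fact that moving the window one step toward the peak trades a smaller value for a larger one.

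It remains to evaluate the centered window sum explicitly in the two parity cases.
\begin{itemize}
\item For $n$ odd, $n=2m+1$, the window is $[n-1-m,\,n-1+m]$. This gives $N=n+2\sum_{j=1}^{m}(n-j)=n+2mn-m(m+1)$. Substituting $m=(n-1)/2$ yields exactly $(3n^2+1)/4$.
\item For $n$ even, $n=2m$, the best window is $[n-1-m,\,n-2+m]$ up to symmetry. The analogous sum gives $3n^2/4$, which is strictly below $(3n^2+1)/4$.
\end{itemize}
Dividing by $n^3$ gives $\frac{3+1/n^2}{4n}$.

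I expect the main obstacle to be bookkeeping rather than ideas. The centering/unimodality argument has to be stated cleanly, and the parity cases have to be tracked carefully so that the closed forms come out right, including the checks at the boundary $n=1,2$. If the window argument becomes awkward, a fallback is inclusion--exclusion. Writing $N(s)=\binom{s+2}{2}-3\binom{s-n+2}{2}+3\binom{s-2n+2}{2}-\dots$, one can maximize the resulting piecewise quadratic in $s$ directly.
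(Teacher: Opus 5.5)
Your proposal is correct and follows essentially the same route as the paper. You reduce to an interval via Leader--Radcliffe, condition on one summand, and sum the triangular pair-count over a window of $n$ consecutive values, which gives $(3n^2+1)/4$ for $n$ odd; you also work out the even case ($3n^2/4$), which the paper only calls analogous, while your window-centering step is harmless but redundant because Leader--Radcliffe already identifies the maximizing value $M$.
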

\proof
Due to Theorem \ref{LRthm}, we can assume $A=\{1,\dots,n\}$ (which is a translate of $\{-(n-1)/2,\dots,(n-1)/2\}$).
Also, Theorem \ref{LRthm} says that
$\mathbb{P}[Y=x]$ is maximal for $x=M=\lfloor\frac{3(n+1)}{2}\rfloor$ when $A=\{1,\dots,n\}$ (which corresponds to $x=0$ or $-\frac{1}{2}$ for $\{-(n-1)/2,\dots,(n-1)/2\}$).
Now we compute the number of triples
$$T=\{(y_1,y_2,y_3)\in A^3: y_1+y_2+y_3=M\}.$$

We consider explicitly only the case $n$ odd: the case $n$ even is completely analogous. Clearly $y_1+y_2+y_3=M$ implies that $y_1+y_2\in [M-n,M-1]=[\frac{n+3}{2},\frac{3n+1}{2}]$ and that $y_3=M-y_2-y_1$.

Given $x\in [\frac{n+3}{2},\frac{3n+1}{2}]$, we need to find the number of pair $(y_1,y_2)$ which sum to $x$. Here we have that, for any $y_2$ such that $1\leq x-y_2\leq n$, there exists exactly one $y_1$ such that $y_1+y_2=x$.
It follows that, if $x\leq n$ there are exactly $x-1$ pairs $y_1,y_2$ which sums to $x$. Similarly, if $x\geq n+1$, there are exactly $2n-(x-1)$ pairs $y_1,y_2$ which sums to $x$.
Noting that the number of pairs that sum to $x$ is equal to that of the pairs that sum to $2n+2-x$, we obtain that:
$$
|T|=\sum_{x\in [\frac{n+3}{2},n]}(x-1)+\sum_{x\in [n+1,\frac{3n+1}{2}]}(2n-(x-1))=
$$
\begin{equation}\label{numbertriple}2\sum_{x\in [\frac{n+3}{2},n]}(x-1)+n=2\left(\sum_{x=1}^{n-1} x-\sum_{x=1}^{\frac{n-1}{2}}x\right)+n=\frac{3n^2+1}{4}.\end{equation}
The thesis follows because
$$\mathbb{P}[Y=x]= \frac{\{(y_1,y_2,y_3)\in A^3: y_1+y_2+y_3=x\}}{n^3}\leq \frac{T}{n^3}\leq \frac{3+1/n^2}{4n}.$$
\endproof
\subsection{Known inequalities for $\mathbb{Z}_k$ and $\mathbb{Z}_p$}
In the previous discussion, we have presented an upper bound on $\mathbb{P}[Y=x]$ for $\mathbb{Z}$. It is then natural to investigate the case of $\mathbb{Z}_k$. Here, given a subset $A$ of size $n$, if the prime factors of $k$ are large enough, there exists a Freiman isomorphism of order $\ell$ from $A$ to a subset $B\subseteq \mathbb{Z}$. We recall that (see Tao and Vu, \cite{TV1}).
\begin{defn}
Let $k\geq 1$, and let $A$, $B$ be additive sets with ambient groups $V$ and $W$ respectively. A Freiman homomorphism of order
$\ell$, say $\phi$, from $(A, V)$ to $(B,W)$ (or more succinctly from $A$ to $B$) is a map $\phi : A \rightarrow B$ with the property that:
$$a_1+a_2+\dots+a_{\ell}=a_1'+a_2'+\dots+a_{\ell}'\rightarrow \phi(a_1)+\phi(a_2)+\dots+\phi(a_{\ell})=\phi(a_1')+\phi(a_2')+\dots+\phi(a_{\ell}').$$
If in addition there is an inverse map $\phi^{-1}: B \rightarrow A$ which is also a Freiman homomorphism of order $\ell$ from $(B,W)$ to $(A, V)$, then we say that $\phi$ is a Freiman isomorphism of order $\ell$ and that $(A, V)$ and $(B,W)$ are Freiman isomorphic of order $\ell$.
\end{defn}
Also from the result of Lev \cite{L3} (see also \cite{CDOR} and Theorem 3.1 of \cite{CP20} where similar bounds were given), we have that
\begin{thm}[Lev, \cite{L3}]
Let $A$ be a subset of size $n$ of $\mathbb{Z}_k$ where the prime factors of $k$ are larger than $n^n$. Then there exists a Freiman isomorphism of any order $\ell\leq n$, from $A$ to a subset $B\subseteq \mathbb{Z}$.
\end{thm}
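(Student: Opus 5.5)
The natural route is to pass from $\mathbb{Z}_k$ to $\mathbb{Z}$ along a Freiman-type correspondence in which all the relevant sums are integer representatives much smaller than the prime factors of $k$. First I would reduce to the cyclic case: since every prime factor $q$ of $k$ exceeds $n^n$, it suffices to work inside $\mathbb{Z}_k$ by treating it prime by prime. The cleanest version is to use a Dirichlet-type simultaneous approximation (rectification) argument. Write $A=\{a_1,\dots,a_n\}\subseteq\mathbb{Z}_k$ and lift the elements to integer representatives $\tilde a_i\in[0,k)$. By Dirichlet's pigeonhole principle applied to the $n$ points $(\tilde a_1 t/k,\dots,\tilde a_n t/k)$ modulo $1$, for $t=0,1,\dots,Q^n$ with $Q>2\ell$, I get a dilation factor $t$ with $1\le t\le Q^n$. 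This $t$ has the property that each $t\tilde a_i$ is congruent modulo $k$ to an integer $b_i$ with $|b_i|<k/Q$. If $t$ is invertible modulo $k$, the dilation $a\mapsto ta$ is a group automorphism of $\mathbb{Z}_k$, so it preserves all additive relations. Invertibility holds because every prime factor of $k$ exceeds $n^n\ge Q^n\ge t$ once $Q\le n$; since $\ell\le n$, I can choose $Q$ with $2\ell<Q\le n$ up to adjusting constants, or instead replace $n^n$ by a slightly larger threshold.

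Next I would set $\phi(a_i)=b_i\in\mathbb{Z}$ and verify that it is a Freiman isomorphism of order $\ell$. If $\sum a_{i_j}=\sum a_{i'_j}$ in $\mathbb{Z}_k$, then after dilation $\sum b_{i_j}\equiv\sum b_{i'_j}\pmod k$. Both sides have absolute value less than $\ell k/Q<k/2$, so their difference lies in $(-k,k)$ and is divisible by $k$, hence it is zero. Conversely, an integer equality reduces modulo $k$ to an equality, and undoing the dilation by $t^{-1}$ recovers the original relation. Injectivity follows from the case of $\ell$-fold sums that agree except in one position.

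The main obstacle is the quantitative bookkeeping. Dirichlet gives $\|t\tilde a_i/k\|\le 1/Q$ with $t\le Q^n$, and I need simultaneously $Q>2\ell$ and $t<$ the smallest prime factor of $k$. With $\ell\le n$ this means $Q=2n+1$, which requires primes larger than $(2n+1)^n$ rather than $n^n$. To reach the stated $n^n$ bound, I would first translate $A$ so that one element is $0$, leaving only $n-1$ free coordinates and hence $t\le Q^{n-1}$. I would also use the sharper observation that one only needs the difference of two $\ell$-sums to have absolute value below $k$, which allows $Q$ slightly above $\ell$. If the constants still do not close, I would fall back on citing Lev's argument as stated, since the theorem is quoted from \cite{L3}.
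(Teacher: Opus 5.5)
The paper gives no proof of this statement (it is quoted from Lev \cite{L3}), so your sketch has to stand on its own, and as written it does not reach the stated threshold. Its core is sound and works directly modulo $k$; the opening ``prime by prime'' reduction is unnecessary. Translate so that $0\in A$, and let $q$ be the least prime factor of $k$. If $1\le t<q$ then $\gcd(t,k)=1$, and if $tA$ lifts to integers $b_i$ with $\ell(\max_i b_i-\min_i b_i)<k$, then $a_i\mapsto b_i$ is a Freiman isomorphism of order $\ell$.

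The trouble is the size of $t$. Pigeonholing with boxes of side $1/Q$ only gives $|b_i|<k/Q$, i.e.\ diameter $<2k/Q$. So you must take $Q=2\ell$, and you need $q>(2\ell)^{n-1}$. An isomorphism of order $n$ is one of every order $\ell\le n$ (add a fixed element to both sides), so the statement is really about $\ell=n$, where you need $q>(2n)^{n-1}$. Since $(2n)^{n-1}/n^n=2^{n-1}/n>1$ for every $n\ge 3$, the hypothesis $q>n^n$ is not enough for your argument. Under that hypothesis it covers only the orders with $(2\ell)^{n-1}\le n^n$, essentially $\ell\le n/2$. Your first paragraph's requirement $2\ell<Q\le n$ is satisfiable only for $\ell<n/2$, for the same reason.

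Neither proposed repair closes this. The translation to $0\in A$ is exactly what already produces the exponent $n-1$. The ``sharper observation'' is false: with $|b_i|<k/Q$, two $\ell$-fold sums can differ by almost $2\ell k/Q$, which exceeds $k$ when $Q$ is only slightly above $\ell$, so a congruence modulo $k$ no longer forces equality. What you need is diameter $<k/\ell$, and symmetric boxes cannot give that with $Q\approx\ell$.

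Nor is this mere bookkeeping. Apply Minkowski's theorem to the lattice $\{(t,b_2,\dots,b_n)\in\mathbb{Z}^n:\ b_i\equiv ta_i \pmod k\}$, of determinant $k^{n-1}$. Use the best possible symmetric convex body $\{|t|\le T,\ \max(0,b_2,\dots,b_n)-\min(0,b_2,\dots,b_n)\le (k-1)/\ell\}$, of volume $2Tn\bigl((k-1)/\ell\bigr)^{n-1}$. This only guarantees a good $t$ once $T$ is about $(2\ell)^{n-1}/n$. For $\ell=n$ that is about $2^{n-1}n^{n-2}$, which exceeds $n^n$ for all $n\ge 7$.

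Getting down to $n^n$ needs a genuinely sharper rectification argument, and that is what the citation of Lev supplies. ``Falling back on citing Lev'' is what the paper does, but then your text is not a proof of the statement. What you do have is a correct proof of the weaker version with $n^n$ replaced by $(2n)^{n-1}$ (or your $(2n+1)^n$). That version would still support Theorem \ref{Zk,ell,epsilon} under the correspondingly stronger hypothesis on the prime factors of $k$.
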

Therefore, the result over the integers, namely Theorem \ref{Z,ell,epsilon}, implies that
\begin{thm}\label{Zk,ell,epsilon}
Let $A$ be a subset of size $n$ of $\mathbb{Z}_k$ where the prime factors of $k$ are larger than $n^n$. Then, set $Y=Y_1+\dots+Y_{\ell}$ the sum of $\ell$ independent uniformly distributed (over the same set $A$) random variables $Y_i$,
$$\mathbb{P}[Y=x]\leq \frac{D}{n\sqrt{\ell-1}}$$
where $D$ is an absolute constant.
\end{thm}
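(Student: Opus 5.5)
We transfer the problem from $\mathbb{Z}_k$ to $\mathbb{Z}$ through the Freiman isomorphism provided by Lev's theorem, and then apply Theorem \ref{Z,ell,epsilon} to the image set. Two cases arise. If $\ell\le n$, Lev's theorem gives a Freiman isomorphism $\phi:A\to B\subseteq\mathbb{Z}$ of order $\ell$, and $|B|=n$. If $\ell>n$, we still want the conclusion. One option is to reduce to a smaller number of summands, but a more direct route is to note that the proof in \cite{L3} only needs the prime factors of $k$ to be large compared with the relevant sums. To keep the argument clean, we first handle $\ell\le n$ and treat the other range at the end.

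Fix $x\in\mathbb{Z}_k$ with $\mathbb{P}[Y=x]>0$, and choose $(a_1,\dots,a_\ell)\in A^\ell$ with $a_1+\dots+a_\ell=x$. Every $(a_1',\dots,a_\ell')\in A^\ell$ with the same sum satisfies $\sum\phi(a_i')=\sum\phi(a_i)=:z$, because $\phi$ is a Freiman homomorphism of order $\ell$. Conversely, if $\sum\phi(a_i')=z$, then $\sum a_i'=x$, because $\phi^{-1}$ is also a homomorphism of order $\ell$. The map $(a_i')\mapsto(\phi(a_i'))$ is therefore a bijection from $\{(a_i')\in A^\ell:\sum a_i'=x\}$ onto $\{(b_i)\in B^\ell:\sum b_i=z\}$. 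Since $\phi$ is a bijection $A\to B$, the variables $\phi(Y_i)$ are i.i.d.\ uniform on $B$, and hence
\[
\mathbb{P}[Y=x]=\mathbb{P}\Big[\textstyle\sum_{i=1}^\ell \phi(Y_i)=z\Big].
\]
Theorem \ref{Z,ell,epsilon} applied to the $n$ distinct integers of $B$ bounds the right-hand side by $\frac{D}{n\sqrt{\ell-1}}$.

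The main obstacle is the range $\ell>n$, where Lev's theorem as stated does not give an isomorphism of order $\ell$. In that range I would argue that the bound is weak enough to follow from a cruder estimate. Conditioning on $Y_1+\dots+Y_{\ell-n}$ reduces the problem to the sum of $n$ variables, so we get $\mathbb{P}[Y=x]\le\max_y\mathbb{P}[Y_1+\dots+Y_n=y]\le \frac{D}{n\sqrt{n-1}}$. This is not $\frac{D}{n\sqrt{\ell-1}}$, however. The cleaner fix, which I would try first, is to check that Lev's construction (via rectification, e.g.\ a dilation $t$ mapping $A$ into a short interval modulo $k$) actually yields an order-$\ell$ isomorphism whenever $\ell\cdot\max|t a|<k/2$. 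That holds under the stated size condition on the prime factors, possibly after strengthening it to account for $\ell$. Failing that, I would read the hypothesis as implicitly requiring $\ell\le n$, which is the regime in which the isomorphism is available. Finally, the case $\ell=1$ should be excluded or interpreted trivially, since $\sqrt{\ell-1}=0$ there.
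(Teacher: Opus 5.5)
For $2\le \ell\le n$ your argument is exactly the paper's: Lev's theorem gives a Freiman isomorphism of order $\ell$ onto a set of $n$ integers, and Theorem~\ref{Z,ell,epsilon} is applied to the image. The paper asserts this transfer in a single line; your explicit bijection between the solution sets in $A^\ell$ and $B^\ell$ is the justification it leaves out.

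The paper says nothing about $\ell>n$ either, and there your ``cleaner fix'' cannot succeed, because the statement is false for large $\ell$. Since $\sum_{x\in\mathbb{Z}_k}\mathbb{P}[Y=x]=1$, we always have $\max_x\mathbb{P}[Y=x]\ge 1/k$. On the other hand, $\frac{D}{n\sqrt{\ell-1}}<\frac{1}{k}$ as soon as $\ell>1+(Dk/n)^2$. Consistently with this, no order-$\ell$ Freiman isomorphism into $\mathbb{Z}$ exists once $\ell\ge k$, because $ka=kb$ for all $a,b\in A$.

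So your fallback is the right one. The statement should be read with $2\le\ell\le n$, the range inherited from Lev's theorem, and this is exactly the range in which the paper's derivation is valid.
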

Note that this bound has quite a strong hypothesis on the prime factors of $k$. 
Another, quite general, inequality can be derived from the results of Lev (\cite{L1, L2}) who prove that
\begin{thm}[Lev, \cite{L1}]

The number of solutions of $a_1+a_2+\dots+a_{\ell}=x$, where $x\in \mathbb{Z}_p$, $a_i\in A\subseteq \mathbb{Z}_p$, and $|A|=n$ is at most
$$N_\lambda(A^{\ell})\leq \frac{1}{p}n^{\ell}+\sqrt{\frac{8}{\pi\ell}}n^{\ell-1}\left(1+2\ell^{-1/2}+(3/4)^{\ell/2+3}\ell^{3/2}\right).$$
\end{thm}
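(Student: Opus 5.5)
This statement is Lev's theorem, so the plan follows the classical Fourier-analytic route in $\mathbb{Z}_p$.

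\textbf{Fourier expansion.} Write $e_p(t)=e^{2\pi i t/p}$ and $\widehat{1_A}(r)=\sum_{a\in A}e_p(ra)$. Orthogonality of characters gives
$$N(x)=\frac{1}{p}\sum_{r\in\mathbb{Z}_p}\widehat{1_A}(r)^{\ell}e_p(-rx)\leq \frac{n^{\ell}}{p}+\frac{1}{p}\sum_{r\neq 0}|\widehat{1_A}(r)|^{\ell}.$$
The $r=0$ term is exactly the main term $n^{\ell}/p$. It therefore suffices to show that
$$\frac1p\sum_{r\neq0}|\widehat{1_A}(r)|^{\ell}\leq \sqrt{\tfrac{8}{\pi\ell}}\,n^{\ell-1}\bigl(1+O(\ell^{-1/2})\bigr).$$

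\textbf{Rearrangement.} The next step reduces to the extremal configuration in which $A$ is an arithmetic progression, i.e.\ an interval. For this I would invoke a rearrangement inequality of Lev type: for fixed $n$, the quantity $\sum_{r}|\widehat{1_A}(r)|^{\ell}$ (equivalently, the maximal number of representations) is maximized when $A$ is an interval. This is the analogue of Theorem~\ref{LRthm} in $\mathbb{Z}_p$ and the genuinely deep input. A cleaner alternative is to bound $N(x)$ directly by the number of representations for an interval $I$ of length $n$, using Lev's result that $\max_x N_A(x)\le \max_x N_I(x)$ in $\mathbb{Z}_p$.

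\textbf{The interval case.} Once $A=I$, the count $N_I(x)$ is at most $\lceil\,\text{number of integer solutions}\,\rceil$ over the $\lfloor \ell n/p\rfloor+1$ lifts of $x$ to $\mathbb{Z}$. Summing over the lifts, each integer contribution is bounded as in Theorem~\ref{Z,ell,epsilon}, except that the explicit local limit constant is used instead of Berry–Esseen.

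The peak probability of a sum of $\ell$ uniform variables on $\{1,\dots,n\}$ is $\frac{1}{n}\sqrt{\frac{6}{\pi\ell}}(1+o(1))$. The contributions of the remaining lifts form a Riemann sum of a Gaussian with spacing $p$. The total is bounded by $n^{\ell}/p$ plus one extra peak, and the peak term produces the $\sqrt{8/(\pi\ell)}$-type constant, with room to spare.

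The error terms $2\ell^{-1/2}$ and $(3/4)^{\ell/2+3}\ell^{3/2}$ come from an explicit Fourier estimate of the interval kernel. One uses $|\sin(\pi n t)/\sin(\pi t)|\le n\exp(-c n^2t^2)$ near $0$ and the bound $\le n\cdot 3/4$-type decay away from $0$.

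\textbf{Main obstacle.} The main obstacle is the rearrangement step, showing intervals are extremal in $\mathbb{Z}_p$. The other delicate point is producing the exact explicit constants. For the latter I would first attempt a direct Fourier comparison, splitting the sum over $r\neq0$ at $|r|\approx p/(n\sqrt{\ell})$. For the extremality I would take it from Lev's work rather than reprove it.
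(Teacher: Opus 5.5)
The paper gives no proof of this statement: it is quoted from Lev and used only as a black box in Corollary~\ref{LevBound}. You also import the extremality of intervals from Lev, so the only part of your outline with real content is the interval computation. There the key claim is wrong.

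\textbf{The interval computation.} You take the peak of the $\ell$-fold sum of uniform variables on $n$ consecutive integers to be $\frac1n\sqrt{6/(\pi\ell)}\,(1+o(1))$, and conclude that $\sqrt{8/(\pi\ell)}$ holds ``with room to spare''. That is only the $n\to\infty$ limit. The variance is $(n^2-1)/12$, so the peak is about $\frac1n\sqrt{6/(\pi\ell(1-n^{-2}))}$. This is largest at $n=2$, where it equals exactly $\frac1n\sqrt{8/(\pi\ell)}$.

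Concretely, for $A=\{0,1\}$ and $p>\ell$ you get $\max_x N_A(x)=\binom{\ell}{\lfloor \ell/2\rfloor}=2^{\ell}\sqrt{2/(\pi\ell)}\,(1+o(1))=n^{\ell-1}\sqrt{8/(\pi\ell)}\,(1+o(1))$. So Lev's leading constant is sharp, and there is no slack in the main term. For the same reason, the statement as quoted needs $n\ge 2$: for $n=1$ the left side is $1$, while the right side tends to $1/p$ as $\ell\to\infty$.

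It follows that all the content of the factor $1+2\ell^{-1/2}+(3/4)^{\ell/2+3}\ell^{3/2}$ lies in a non-asymptotic estimate of the integer peak. That estimate must be uniform in $n\ge2$ and valid for every $\ell$, for instance via $\int_{-1/2}^{1/2}|\sin(\pi n t)/\sin(\pi t)|^{\ell}\,dt$. In your majorant $n\exp(-cn^2t^2)$ you are forced to take $c=\pi^2/8$:
\begin{itemize}
\item no larger $c$ is admissible, because at $n=2$ the kernel is $2|\cos \pi t|$;
\item a smaller $c$ yields a leading constant larger than the claimed one.
\end{itemize}
The part of the integral away from $t=0$ also has to be bounded explicitly. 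Local-limit or Berry--Esseen statements with $(1+o(1))$ factors cannot produce an inequality valid for all $n$ and $\ell$. Your sketch never actually produces the stated error terms.

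\textbf{The extremality step.} This step is also misstated. The moment $\sum_r|\widehat{1_A}(r)|^{\ell}$ is not ``equivalent'' to $\max_x N_A(x)$. For even $\ell=2k$ it equals $p$ times the number of solutions of $a_1+\dots+a_k=a_{k+1}+\dots+a_{2k}$ with $a_i\in A$. That count dominates $\max_x N_A(x)$ by Cauchy--Schwarz, and this is the setting where an interval-extremality statement makes combinatorial sense.

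For odd $\ell$ the moment has no such interpretation, and ``intervals maximize it'' is not something you can quote. For $\ell=1$ it is false: the moment is $O(p\log p)$ for an interval of length $(p-1)/2$, but at least $(p-1)(\sqrt p-1)/2$ for the quadratic residues. In the Fourier route you would handle odd $\ell$ by Cauchy--Schwarz between the $(\ell-1)$st and $(\ell+1)$st moments over $r\neq 0$.

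Your ``cleaner alternative'' has a sound architecture: a $\mathbb{Z}_p$ analogue of Theorem~\ref{LRthm} for $\max_x N_A(x)$, then lifting to $\mathbb{Z}$, then the unimodality bound $\sum_j M(m_0+jp)\le \frac1p\sum_m M(m)+\max_m M(m)$. However, it needs a precise reference for the rearrangement in $\mathbb{Z}_p$, and it still leaves the sharp uniform estimate above as the real work.
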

From this theorem, it immediately follows that
\begin{cor}\label{LevBound}
Let $A$ be a subset of size $n$ of $\mathbb{Z}_k$. Then, set $Y=Y_1+\dots+Y_{\ell}$ the sum of $\ell$ independent uniformly distributed (over the same set $A$) random variables $Y_i$,
$$\mathbb{P}[Y=x]\leq \frac{N_\lambda(A^{\ell})}{n^{\ell}}$$
and hence
$$\mathbb{P}[Y=x]\leq \frac{1}{p}+\frac{1}{n}\sqrt{\frac{8}{\pi\ell}}\left(1+2\ell^{-1/2}+(3/4)^{\ell/2+3}\ell^{3/2}\right).$$
\end{cor}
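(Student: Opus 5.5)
The first inequality I would get by a direct counting argument. The event $Y=x$ is the union, over tuples $(a_1,\dots,a_\ell)\in A^\ell$ with $a_1+\dots+a_\ell=x$, of the events $\{Y_1=a_1,\dots,Y_\ell=a_\ell\}$. These events are pairwise disjoint.

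By independence and uniformity, each of them has probability
$$\prod_{i=1}^{\ell}\mathbb{P}[Y_i=a_i]=\frac{1}{n^{\ell}}.$$
Hence $\mathbb{P}[Y=x]$ equals $n^{-\ell}$ times the number of solutions of $a_1+\dots+a_\ell=x$ with $a_i\in A$. The theorem of Lev just stated bounds this number by $N_\lambda(A^{\ell})$, which gives the first inequality.

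For the second inequality I would substitute Lev's explicit bound for $N_\lambda(A^{\ell})$ and divide term by term by $n^{\ell}$. The term $\frac1p n^{\ell}$ becomes $\frac1p$. The term
$$\sqrt{\frac{8}{\pi\ell}}\,n^{\ell-1}\left(1+2\ell^{-1/2}+(3/4)^{\ell/2+3}\ell^{3/2}\right)$$
becomes $\frac{1}{n}\sqrt{\frac{8}{\pi\ell}}\left(1+2\ell^{-1/2}+(3/4)^{\ell/2+3}\ell^{3/2}\right)$.

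The only point needing care is the ambient group. Lev's theorem is stated for $\mathbb{Z}_p$, while the corollary refers to $\mathbb{Z}_k$ but uses $p$. I would read the corollary as the case $k=p$ prime, which is the only setting in which Lev's bound applies as quoted, and note this explicitly. No other obstacle arises: both steps are immediate rewritings once the probability is identified with a normalized solution count.
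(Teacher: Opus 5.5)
Your argument is correct and is exactly the one-line derivation the paper has in mind when it says the corollary ``immediately follows'': write $\mathbb{P}[Y=x]$ as $n^{-\ell}$ times the number of solutions of $a_1+\dots+a_\ell=x$ in $A^\ell$, then divide Lev's bound by $n^{\ell}$. Your reading of the ambient group is also right: the first inequality holds in any $\mathbb{Z}_k$, and is in fact an equality when $N_\lambda(A^{\ell})$ is the solution count with $\lambda=x$, while the second inequality needs Lev's theorem and hence $k=p$ prime.
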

Now, set $\tilde{C}:=\sqrt{\frac{8}{\pi\ell}}\left(1+2\ell^{-1/2}+(3/4)^{\ell/2+3}\ell^{3/2}\right)$, the bound of Corollary \ref{LevBound} can be written as
$$\mathbb{P}[Y=x]\leq \frac{1}{p}+\frac{\tilde{C}}{n}$$
which has, for $p$ and $\ell$ large enough, the same behavior $\frac{\tilde{D}}{n\sqrt{\ell}}$ of the bound of Theorem \ref{Z,ell,epsilon}.
\section{Anticoncentration for small $\ell$}
In the previous Section, we have reviewed the known anticoncentration inequalities on $\mathbb{P}[Y=x]$. In particular, in $\mathbb{Z}_p$, the bound of Corollary \ref{LevBound} appears to be very powerful for large values of $\ell$. On the other hand, a direct computation shows that, if $\ell$ is small (i.e., $\ell<24$), the constant $\tilde{C}$ becomes larger than one, and that bound becomes trivial.
This section aims to provide a nontrivial bound also in this range. Now we consider a set $A=\{v_1,v_2,\dots,v_n\}$ of distinct elements of $\mathbb{Z}_p$, and the random variable $Y$ given by the sum of $\ell$ independent and uniformly distributed on $A$ variables $Y_1,\dots, Y_{\ell}$. The goal is here to provide an upperbound on $\left(\max_{x\in \mathbb{Z}_p} \mathbb{P}[Y=x]\right)$. First of all, we consider the case $\ell=3$, and we prove the following Lemma.
\begin{lem}\label{UniformSym}
Let $n\geq 2$, $p>2n$, $A=\{v_1,v_2,\dots,v_n\}$ be a set of distinct elements of $\mathbb{Z}_p$ such that $-v\in A$ whenever $v\in A$ and let $Y_1,Y_2$ and $Y_3$ be independent variables which are uniformly distributed on $A$. Then there exists an absolute constant $C_1<1$\footnote{Here the best approximation we have for this constant is $C_1< 0.99993.$}, such that, set $Y=Y_1+Y_2+Y_3,$ we have:
$$\left(\max_{x\in \mathbb{Z}_p} \mathbb{P}[Y=x]\right)\leq\frac{C_1}{n}.$$
\end{lem}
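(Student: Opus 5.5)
Throughout, fix $x\in\mathbb{Z}_p$ and condition on $Y_3$. As in \eqref{riduzione},
$$\mathbb{P}[Y=x]=\frac{1}{n}\,\mathbb{P}[Y_1+Y_2\in x-A],$$
so it suffices to find an absolute $c>0$ such that $\mathbb{P}[Y_1+Y_2\in x-A]\le 1-c$ for every $x$. Equivalently, we must show that a positive proportion of the $n^2$ pairs $(a,b)\in A^2$ satisfy $x-a-b\notin A$, i.e.\ $a+b\notin x-A$. Write $B=x-A$, so that $|B|=n$. The claim becomes that $E(A,B):=|\{(a,b)\in A^2: a+b\in B\}|\le (1-c)n^2$.

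\textbf{Heuristic.} If almost all pairs had $a+b\in B$, then $A+A$ would be ``almost'' contained in a set of size $n$. A Balog–Szemerédi/Freiman-type argument would then force $A$ to be close to a coset of a subgroup, and $\mathbb{Z}_p$ has no nontrivial proper subgroups. Since $p>2n$, Cauchy–Davenport also gives $|A+A|\ge 2n-1$. This robust structure argument would give some absolute constant, but not a good explicit one. I would therefore prefer a direct elementary argument that exploits the symmetry $A=-A$.

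\textbf{Main step.} Suppose $E(A,B)\ge(1-\delta)n^2$. Then for all but at most $\sqrt{\delta}\,n$ elements $a\in A$ we have $|(a+A)\cap B|\ge(1-\sqrt{\delta})n$; call these elements good. For two good elements $a,a'$, the translates $a+A$ and $a'+A$ each almost coincide with $B$, hence with each other: $|(A+(a-a'))\triangle A|\le 4\sqrt{\delta}\,n$. Now use symmetry. Since $A=-A$, if $a$ is good then $-a\in A$, and $-a$ is good provided $\sqrt{\delta}\,n<n/2$. Taking $a'=-a$ shows that $A$ is almost invariant under translation by $2a$ for most $a\in A$.

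Let $d=2a\neq0$, where $a\neq 0$; such an $a$ exists because $n\ge2$. Almost-invariance under $+d$ means that $A$ is almost a union of arithmetic progressions with difference $d$, with few endpoints. The number of $v\in A$ with $v+d\notin A$ is at most $2\sqrt{\delta}\,n$. But $A$ cannot contain any full coset of $\langle d\rangle=\mathbb{Z}_p$, because $|A|=n<p/2$. Hence every maximal $d$-progression inside $A$ has an endpoint, so the number of such endpoints is at least the number of progressions, which is at least $1$.

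This alone does not give a contradiction, since we only know the number of endpoints is at least $1$, not that it is a positive proportion of $n$. To fix this I would use many different differences $d=2a$ simultaneously. Let $S=\{v\in A: v+2a\in A \text{ for most good } a\}$. A counting argument then shows that $A$ has additive energy close to $n^3$, which in $\mathbb{Z}_p$ with $n<p/2$ is impossible. This can be quantified by a rectification argument or by Fourier analysis: if $|\widehat{1_A}(r)|\le(1-\eta)n$ for all $r\ne0$, then $E(A,B)\le n^3/p+(1-\eta)n^2$. The hypothesis $p>2n$, via Cauchy–Davenport, should give such an $\eta$ uniformly.

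\textbf{Expected main obstacle.} The hardest step will be making this final contradiction quantitative with an absolute constant. That is, I must show that when $p>2n$ no set $A$ can have a sumset almost all of whose mass falls in a single translate of $-A$. The tiny constant $C_1<0.99993$ in the footnote suggests that the author's argument, like this one, loses heavily through a chain of averaging and pigeonhole steps. I would first try to bound the number of ``endpoints'' directly: show that for a positive fraction of good $a$, the set $\{v\in A: v+2a\notin A\}$ has size at least $c' n$. The starting point would be the Cauchy–Davenport bound $|A+\{0,2a\}|\ge n+1$, summed cleverly over many $a$.
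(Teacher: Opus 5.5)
\textbf{There is a genuine gap: the final contradiction, which is the whole content of the lemma, is never established, and the tool you propose for it is false.}

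Your reduction to bounding $E(A,B)$ for $B=x-A$ is correct. The passage to additive energy is immediate by Cauchy--Schwarz: $E(A,B)^2\le n\sum_y r(y)^2$ with $r(y)=|\{(a,b)\in A^2:a+b=y\}|$.

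The uniform Fourier bound does not exist. For $A=\{-m,\dots,m\}$ one has $|\widehat{1_A}(1)|=\sin(\pi n/p)/\sin(\pi/p)=n\bigl(1-O(n^2/p^2)\bigr)$, so no absolute $\eta$ works once $p/n$ is large. No sumset inequality such as Cauchy--Davenport can produce one. Sets concentrated on a short arc are exactly the structured case the paper treats separately, in two cases:
\begin{enumerate}
\item[(i)] If many atoms of $Y_1+Y_2$ are nearly maximal, the paper uses the real expansion $\mathbb{P}[Y_1+Y_2=x]=\frac1p\sum_k\cos(2\pi xk/p)\hat f(k)^2$. This is where $A=-A$ is used. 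After a dilation those atoms lie in $[-p/6,p/6]$, so most of $A$ lies in an arc of length $p/3$. There triple sums behave as in $\mathbb{Z}$, and Theorem~\ref{Z,3,epsilon} gives the saving.
\item[(ii)] If few atoms are nearly maximal, a direct bound suffices.
\end{enumerate}
Your endpoint fallback does not replace this. Cauchy--Davenport for $A+\{0,2a\}$, summed over $a$, only guarantees about $n$ pairs $(v,a)$ with $v+2a\notin A$, whereas you need $cn^2$.

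\textbf{How your own main step closes the argument.} Apply Cauchy--Davenport to the set $G$ of good elements rather than to $A+\{0,2a\}$; this needs no Fourier analysis and no symmetry.
\begin{enumerate}
\item[(a)] You showed $|G|\ge(1-\sqrt\delta)n$, and $|A\cap(A+d)|\ge(1-2\sqrt\delta)n$ for every $d\in G-G$.
\item[(b)] Since $\sum_d|A\cap(A+d)|=n^2$, at most $n/(1-2\sqrt\delta)$ values of $d$ satisfy this. For $\sqrt\delta<1/4$ this is $<2n<p$.
\item[(c)] Cauchy--Davenport (the option $|G-G|\ge p$ being excluded by (b)) then gives $2(1-\sqrt\delta)n-1\le|G-G|\le n/(1-2\sqrt\delta)$.
\item[(d)] This fails for $\sqrt\delta=1/10$ and $n\ge2$.
\end{enumerate}
Hence $E(A,x-A)<0.99n^2$, and $\max_x\mathbb{P}[Y=x]<0.99/n$. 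This holds without assuming $A=-A$ and beats the paper's constant. Pollard's theorem would even give $(3+1/n^2)/(4n)$ directly.
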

\proof
First of all, we note that, if $n=2$, we may suppose, without loss of generality, that $A=\{-1,1\}$. For this set, with the same proof of Theorem \ref{Z,3,epsilon}, we obtain that $\mathbb{P}[Y=x]\leq \frac{3+1/4}{4n}$ (which give a better constant than $\frac{C_1}{n}$). So, in the following, we only consider the case $n\geq 3$.

Following the proof of Proposition 6.1 of \cite{FKS}, set $f=\sum_{i=1}^n \frac{1}{n}\delta_{v_i}$, we have that its discrete Fourier transform is
$$ \hat{f}(k)=\sum_{i=1}^n \frac{1}{n}e^{-2\pi i v_ik/p}.$$
Then, the probability that $Y_1+Y_2=x$ can be written as
$$\mathbb{P}[Y_1+Y_2=x]=(f*f)(x)$$
$$=\frac{1}{p}\sum_{k=0}^{p-1} e^{2\pi ixk/p} (\hat{f}(k))^2$$
$$=\frac{1}{p}\sum_{k=0}^{p-1} e^{2\pi ixk/p} \left(\sum_{i=1}^n \frac{1}{n}e^{-2\pi i v_ik/p}\right)^2.$$
Here we note that, since the set $A$ is symmetric with respect to $0$, $$\mathbb{P}[Y_1+Y_2=x]=\mathbb{P}[Y_1+Y_2=-x].$$
Thus we can write
\begin{equation}\label{cos1}
\mathbb{P}[Y_1+Y_2=x]=\frac{1}{p}\sum_{k=0}^{p-1} \cos( 2\pi xk/p) \left(\sum_{i=1}^n \frac{1}{n}e^{-2\pi i v_ik/p}\right)^2.
\end{equation}
Since, due again to the symmetry of $A$, we also have that both $e^{-2\pi i v_ik/p}$ and $e^{2\pi i v_ik/p}$ appears in $\hat{f}(k)$, Equation \eqref{cos1} can be written as:
\begin{equation}\label{cos2}
\mathbb{P}[Y_1+Y_2=x]=\frac{1}{p}\sum_{k=0}^{p-1} \cos( 2\pi xk/p) \left(\sum_{i=1}^n \frac{1}{n}\cos(2\pi v_ik/p)\right)^2.
\end{equation}
Now we consider $B\subseteq \mathbb{Z}_p$ of cardinality $n$ such that
$\mathbb{P}[Y_1+Y_2\in B]$ is maximal and two positive real numbers $\epsilon_1,\epsilon_2\leq 1/24$. We divide $B$ into two sets defined as follows:
$$B_1:=\{x\in B: \mathbb{P}[Y_1+Y_2=x]\geq \frac{1-\epsilon_1}{n}\};$$
$$B_2:=B\setminus B_1=\{x\in B: \mathbb{P}[Y_1+Y_2=x]< \frac{1-\epsilon_1}{n}\}.$$
Now we divide the proof into two cases.

CASE 1: $|B_1|\geq (1-\epsilon_2)n$. Here we note that
\begin{equation}\label{val0}
\mathbb{P}[Y_1+Y_2=0]=\frac{1}{n}=\frac{1}{p}\sum_{k=0}^{p-1}(\hat{f}(k))^2.
\end{equation}
This means that, for the values $x \in B_1$, we can not lose too much by inserting the cosine. Indeed, for these values, we have that
\begin{equation}\label{valx}
\mathbb{P}[Y_1+Y_2=x]=\frac{1}{p}\sum_{k=0}^{p-1} \cos( 2\pi xk/p)(\hat{f}(k))^2\geq \frac{1-\epsilon_1}{n}.
\end{equation}
Taking the average over the values $x\in B_1$, we obtain
$$\frac{1}{|B_1|}\mathbb{P}[Y_1+Y_2\in B_1]=\frac{1}{|B_1|}\sum_{x\in B_1}\frac{1}{p}\sum_{k=0}^{p-1} \cos( 2\pi xk/p)(\hat{f}(k))^2=
$$
\begin{equation}\label{average}
=\frac{1}{p}\sum_{k=0}^{p-1} \left ((\hat{f}(k))^2 \frac{\sum_{x\in B_1}\cos( 2\pi xk/p)}{|B_1|}\right)=\frac{1}{p}+\frac{1}{p}\sum_{k=1}^{p-1} \left ((\hat{f}(k))^2 \frac{\sum_{x\in B_1}\cos( 2\pi xk/p)}{|B_1|}\right),
\end{equation}
where the last equalities hold by commuting the sums and noting that $\hat{f}(0)=1$.
Now, Equations \eqref{val0} and \eqref{valx} implies that
$$\frac{1}{|B_1|}\mathbb{P}[Y_1+Y_2\in B_1] \geq \frac{1-\epsilon_1}{n}=\frac{1}{p}\sum_{k=0}^{p-1}\left((\hat{f}(k))^2(1-\epsilon_1)\right)=$$ $$=\frac{1-\epsilon_1}{p}+\frac{1}{p}\sum_{k=1}^{p-1}\left((\hat{f}(k))^2(1-\epsilon_1)\right).$$
Now since $2n< p$, Equation \eqref{average} implies the existence of $\bar{k}\not=0$ such that \begin{equation}\label{bark}\sum_{x\in B_1}\frac{\cos( 2\pi x\bar{k}/p)}{|B_1|}\geq (1-2\epsilon_1).\end{equation}
Indeed, otherwise, we would have that
$$\frac{1}{p}+\frac{1}{p}\sum_{k=1}^{p-1} \left ((\hat{f}(k))^2 (1-2\epsilon_1)\right)> \frac{1}{p}+\frac{1}{p}\sum_{k=1}^{p-1} \left ((\hat{f}(k))^2 \frac{\sum_{x\in B_1}\cos( 2\pi xk/p)}{|B_1|}\right)\geq $$ $$ \frac{1-\epsilon_1}{p}+\frac{1}{p}\sum_{k=1}^{p-1}\left((\hat{f}(k))^2(1-\epsilon_1)\right)$$
that is, since $2n\leq p$,
$$\frac{\epsilon_1}{p}> \frac{1}{p}\sum_{k=1}^{p-1}(\hat{f}(k))^2\epsilon_1=\epsilon_1\left(\frac{1}{n}-\frac{1}{p} \right)\geq \frac{\epsilon_1}{p}$$
which is a contradiction. Also, note that since $\mathbb{Z}_p$ is a field, we may suppose, without loss of generality, that $\bar{k}=1$.

This means that
\begin{equation}\label{cos3}\sum_{x\in B_1}\frac{\cos( 2\pi x/p)}{|B_1|}\geq (1-2\epsilon_1).\end{equation}
Hence the set
$$B_1':=\{x\in B_1: x/p\in [-1/6,1/6]\}=\{x\in B_1: \cos( 2\pi x/p)\in [1/2,1]\}$$ has size larger than $(1-2\epsilon_2-4\epsilon_1)n$.
Indeed, since $|B|\leq n$, $|B_1'|<(1-2\epsilon_2-4\epsilon_1)n$ would imply that
$$\frac{n+(1-2\epsilon_2-4\epsilon_1)n}{2} \geq \frac{|B_1|+|B_1'|}{2}=|B_1'|+\frac{|B_1|-|B_1'|}{2}.$$
Then we would obtain
$$(1-\epsilon_2-2\epsilon_1)n>|B_1'|+\frac{|B_1\setminus B_1'|}{2}\geq \sum_{x\in B_1}\cos( 2\pi x/p)\geq (1-2\epsilon_1)(1-\epsilon_2)n$$
which is a contradiction.

Now we come back to the estimation of $\max_{x\in \mathbb{Z}_p}\mathbb{P}[Y=x]$. Because of the independence of the variables $Y_1,Y_2$ and $Y_3$, we have
\begin{equation}\label{independence}
\mathbb{P}[Y=x]=\sum_{v_i\in A}\mathbb{P}[Y_1+Y_2=x-v_i]\mathbb{P}[Y_3=v_i].
\end{equation}
Now we split this computation according to whether $x-v_i$ belongs to $B_1'$ to $B_1'':=B_1\setminus B_1'$ or $B_2$.
\begin{equation}\label{split1}
\sum_{v_i\in A:\ x-v_i\in B_1'}\mathbb{P}[Y_1+Y_2=x-v_i]\mathbb{P}[Y_3=v_i]+\sum_{v_i\in A:\ x-v_i\in B_1''}\mathbb{P}[Y_1+Y_2=x-v_i]\mathbb{P}[Y_3=v_i]+
\end{equation}
$$+\sum_{v_i\in A:\ x-v_i\in B_2}\mathbb{P}[Y_1+Y_2=x-v_i]\mathbb{P}[Y_3=v_i].$$
Due to Equations \eqref{val0} and \eqref{valx}, $\mathbb{P}[Y_1+Y_2=x-v_i]$ is always bounded by $1/n$. Then, since $|\{v_i\in A:\ x-v_i\in B_1''\}|\leq (4\epsilon_1+2\epsilon_2)n$, we can provide the following bounds:
$$\sum_{v_i\in A:\ x-v_i\in B_1'}\mathbb{P}[Y_1+Y_2=x-v_i]\mathbb{P}[Y_3=v_i]\leq\frac{1}{n} \frac{|\{v_i\in A:\ x-v_i\in B_1'\}|}{n};$$
$$\sum_{v_i\in A:\ x-v_i\in B_1''}\mathbb{P}[Y_1+Y_2=x-v_i]\mathbb{P}[Y_3=v_i]\leq \frac{4\epsilon_1+2\epsilon_2}{n};$$
and
$$\sum_{v_i\in A:\ x-v_i\in B_2}\mathbb{P}[Y_1+Y_2=x-v_i]\mathbb{P}[Y_3=v_i]<\frac{1-\epsilon_1}{n}\frac{|\{v_i\in A:\ x-v_i\in B_2\}|}{n}\leq \frac{\epsilon_2(1-\epsilon_1)}{n}.$$
Therefore $\mathbb{P}[Y=\bar{x}]\geq\frac{1-\epsilon_2}{n}+\frac{\epsilon_2(1-\epsilon_1)}{n}$ (the right-hand side of this inequality will be denoted in the following by $\frac{C_1}{n}$) implies that
$$\frac{1}{n} \frac{|\{v_i\in A:\ \bar{x}-v_i\in B_1'\}|}{n}+\frac{4\epsilon_1+2\epsilon_2}{n}+\frac{\epsilon_2(1-\epsilon_1)}{n^2}\geq \frac{1-\epsilon_2}{n}+\frac{\epsilon_2(1-\epsilon_1)}{n}.$$
As a consequence, we have
$$|\{v_i\in A:\ \bar{x}-v_i\in B_1'\}|\geq (1-4\epsilon_1-3\epsilon_2)n$$
and hence, set
$$A_1:=\{v_i\in A:\ v_i\in [-1/6-\bar{x},1/6-\bar{x}]\},$$
we have that
\begin{equation}\label{vi}
|A_1|\geq |\{v_i\in A:\ \bar{x}-v_i\in B_1'\}|\geq (1-4\epsilon_1-3\epsilon_2)n.
\end{equation}
On the other hand, given two triples $v_1,v_2,v_3$ and $v_1',v_2',v_3'\in A_1$, we have that $v_1+v_2+v_3=v_1'+v_2'+v_3'$ if and only if they have the same sum also in $\mathbb{Z}$ (with the trivial identification).
We note that, since $\epsilon_1,\epsilon_2\leq 1/24$ and $n\geq 3$, $n (1-4\epsilon_1-3\epsilon_2)>2$.
Hence, due to Theorem \ref{Z,3,epsilon},
$$\max_{x\in \mathbb{Z}_p} \mathbb{P}[Y=x|Y_1,Y_2,Y_3\in A_1]\leq \frac{3+1/9}{4n (1-4\epsilon_1-3\epsilon_2)}.$$
Here we have that
$$\mathbb{P}[Y=x|Y_1\not\in A_1]=\mathbb{P}[Y=x|Y_2\not\in A_1]=\mathbb{P}[Y=x|Y_3\not\in A_1],$$
and that $\mathbb{P}[Y=x|Y_1\not\in A_1]$ can be written as
$$\sum_{y\in \mathbb{Z}_p}\mathbb{P}[Y_1+Y_2=y|Y_1\not\in A_1]\mathbb{P}[Y_3=x-y]\leq \sum_{y\in \mathbb{Z}_p}\mathbb{P}[Y_1+Y_2=y|Y_1\not\in A_1] \frac{1}{n}=\frac{1}{n}.$$
Hence we can bound $\mathbb{P}[Y=x]$ as follows:
$$\mathbb{P}[Y=x]\leq \mathbb{P}[Y=x|Y_1,Y_2,Y_3\in A_1]+3\mathbb{P}[Y=x|Y_1\not\in A_1]\mathbb{P}[Y_1\not\in A_1]\leq$$
\begin{equation}\label{case1}
\frac{3+1/9}{4n (1-4\epsilon_1-3\epsilon_2)}+\frac{3}{n}\frac{|A\setminus A_1|}{n}\leq \frac{3+1/9}{4n (1-4\epsilon_1-3\epsilon_2)} +3\frac{4\epsilon_1+3\epsilon_2}{n}.
\end{equation}
Now we consider the pair $(\epsilon_1,\epsilon_2)$ to satisfy
$$\frac{3+1/9}{4n (1-4\epsilon_1-3\epsilon_2)} +3\frac{4\epsilon_1+3\epsilon_2}{n}\leq \frac{1-\epsilon_2}{n}+\frac{\epsilon_2(1-\epsilon_1)}{n}.$$ Note that the set of these pairs is clearly non-empty, since $(0,0)$ satisfies this relation with the strong inequality. Hence, there is a nontrivial set of positive pairs $\epsilon_1,\epsilon_2\leq 1/24$ which satisfy it.
For these pairs, we obtain that
$$\mathbb{P}[Y=x]\leq \frac{1-\epsilon_2}{n}+\frac{\epsilon_2(1-\epsilon_1)}{n}=\frac{C_1}{n}$$
and we conclude CASE 1 by computing (with Mathematica) the minimum possible $C_1$ in this range, which is $C_1< 0.99993$.

CASE 2: $|B_1|< (1-\epsilon_2)n$. Here we note that Equation \eqref{independence} can be split as follows:
$$\mathbb{P}[Y=x]=$$
\begin{equation}\label{split2}
\sum_{v_i\in A:\ x-v_i\in B_1}\mathbb{P}[Y_1+Y_2=x-v_i]\mathbb{P}[Y_3=v_i]+\sum_{v_i\in A:\ x-v_i\in B_2}\mathbb{P}[Y_1+Y_2=x-v_i]\mathbb{P}[Y_3=v_i].
\end{equation}
Now we can provide the following bounds:
$$\sum_{v_i\in A:\ x-v_i\in B_1}\mathbb{P}[Y_1+Y_2=x-v_i]\mathbb{P}[Y_3=v_i]\leq \frac{|\{v_i\in A:\ x-v_i\in B_1\}|}{n}\frac{1}{n}$$
and
$$\sum_{v_i\in A:\ x-v_i\in B_2}\mathbb{P}[Y_1+Y_2=x-v_i]\mathbb{P}[Y_3=v_i]\leq \frac{|\{v_i\in A:\ x-v_i\in B_2\}|(1-\epsilon_1)}{n}\frac{1}{n}.$$
It follows that
\begin{equation}\label{split3}
\mathbb{P}[Y=x]\leq \frac{|\{v_i\in A:\ x-v_i\in B_1\}|}{n}\frac{1}{n}+ \frac{|\{v_i\in A:\ x-v_i\in B_2\}|(1-\epsilon_1)}{n}\frac{1}{n}.
\end{equation}
We note that the right-hand side of Equation \eqref{split3} increases with $|B_|$. 
Therefore, since $|B_1|< (1-\epsilon_2)n$, we upper-bound the right hand side of Equation \eqref{split3} by assuming that $|\{v_i\in A:\ x-v_i\in B_1\}|= (1-\epsilon_2)n$ and
$|\{v_i\in A:\ x-v_i\in B_2\}|=\epsilon_2 n.$
Summing up, we obtain that
\begin{equation}\label{case2}
\mathbb{P}[Y=x]\leq \frac{1-\epsilon_2}{n}+\frac{\epsilon_2(1-\epsilon_1)}{n}=\frac{C_1}{n}
\end{equation}
which concludes CASE 2.
\endproof

\begin{prop}\label{Uniform}
Let $n\geq 2$, $p>2n$, $A=\{v_1,v_2,\dots,v_n\}$ be distinct elements of $\mathbb{Z}_p$ and let $Y_1,Y_2$ and $Y_3$ be independent variables which are uniformly distributed on $A$. Then there exists $C_2<1$\footnote{Here the best approximation we have for this constant is $C_2< 0.999986.$}, such that, set $Y=Y_1+Y_2+Y_3$, we have:
$$\left(\max_{x\in \mathbb{Z}_p} \mathbb{P}[Y=x]\right)\leq\frac{C_2}{n}.$$
\end{prop}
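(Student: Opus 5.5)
The plan is to rerun the proof of Lemma \ref{UniformSym} and notice that the symmetry $A=-A$ is used in only one place: to make $\hat f(k)$ real, so that $\mathbb{P}[Y_1+Y_2=x]$ becomes a cosine sum with nonnegative weights $(\hat f(k))^2$. Without symmetry, write $\hat f(k)=|\hat f(k)|e^{-i\theta_k}$. Since $\mathbb{P}[Y_1+Y_2=x]$ is real,
$$\mathbb{P}[Y_1+Y_2=x]=\frac1p\sum_{k=0}^{p-1}|\hat f(k)|^2\cos\!\left(2\pi xk/p-2\theta_k\right),$$
with $\theta_0=0$. By Parseval, $\frac1p\sum_k|\hat f(k)|^2=\frac1n$. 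The trivial bound $\mathbb{P}[Y_1+Y_2=x]\le \frac1n$ still holds, because for fixed $Y_2$ at most one value of $Y_1$ works. These two facts, \eqref{val0} and \eqref{valx} in modified form, are all that the rest of the argument needs.

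With the same $\epsilon_1,\epsilon_2\le 1/24$, I define $B$, $B_1$ and $B_2$ exactly as before.

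\textbf{Case 2.} This case uses only the pointwise bounds $\frac1n$ and $\frac{1-\epsilon_1}{n}$. It goes through verbatim and gives the same bound $\frac{1-\epsilon_2}{n}+\frac{\epsilon_2(1-\epsilon_1)}{n}$.

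\textbf{Case 1.} Averaging over $x\in B_1$ as in \eqref{average}, the same contradiction argument (which uses $p>2n$) yields some $\bar k\neq 0$ with
$$\frac{1}{|B_1|}\sum_{x\in B_1}\cos\!\left(2\pi x\bar k/p-2\theta_{\bar k}\right)\ge 1-2\epsilon_1 .$$
Dilating by $\bar k^{-1}$ (allowed since $\mathbb{Z}_p$ is a field), this says that $B_1$ is concentrated on the arc of length $1/3$ centred at the phase $c=2\theta_{\bar k}/2\pi$, rather than at $0$. The counting argument then gives a set $B_1'\subseteq B_1$ with $|B_1'|\ge(1-2\epsilon_2-4\epsilon_1)n$, lying in the translated arc $\{x:\ x/p-c\in[-1/6,1/6]\}$.

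Everything afterwards is translation invariant. The splitting \eqref{split1} again shows the following: if $\mathbb{P}[Y=\bar x]$ exceeds the target constant, then
$$A_1=\{v\in A:\ \bar x-v\in B_1'\}$$
has at least $(1-4\epsilon_1-3\epsilon_2)n$ elements and lies in an arc of length $1/3$. Three elements of such an arc have sums that do not wrap around modulo $p$. After translating the arc to start near $0$, sums of triples therefore agree in $\mathbb{Z}_p$ if and only if they agree in $\mathbb{Z}$. Theorem \ref{Z,3,epsilon} then applies to $Y$ conditioned on $Y_1,Y_2,Y_3\in A_1$. Combining this with the bound $\frac3n\,\mathbb{P}[Y_1\notin A_1]$ gives the analogue of \eqref{case1}, and the optimisation over $(\epsilon_1,\epsilon_2)$ concludes.

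The main obstacle is the boundary of the arc. Three points in a closed arc of length exactly $p/3$ can have sums spanning a full period, which would break the Freiman-type injectivity. In the symmetric case this is harmless, but with an arbitrary phase $c$ I would shrink the arc slightly: use $\cos\ge 1/2+\eta$, giving an arc of length strictly less than $1/3$. I would pay for this with a slightly worse counting constant in place of the factors $2$ and $4$ in the bound on $|B_1'|$, which is plausibly why $C_2$ is marginally larger than $C_1$. Alternatively, one could first try to reduce to the symmetric case via $A\cup(-A)$ or $A-A$, but that changes $n$ and seems lossier. So I would go with the phase-shifted arc and recompute the admissible region for $(\epsilon_1,\epsilon_2)$ numerically to extract $C_2<1$.
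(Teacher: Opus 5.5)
Your argument is correct, but it takes a different route from the paper's.

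\textbf{The paper's route.} The paper does not reopen the Fourier argument. It uses Lemma~\ref{UniformSym} as a black box, through one more dichotomy. Translate $A$ by $-\bar x/2$, where $\bar x$ is a mode of $Y_1+Y_2$. Then $\max_y\mathbb{P}[Y_1+Y_2=y]=\mathbb{P}[Y_1+Y_2=0]=|A\cap(-A)|/n^2$.
\begin{itemize}
\item If this maximum is below $(1-\epsilon_3)/n$, convolving with $Y_3$ gives $\mathbb{P}[Y=x]<(1-\epsilon_3)/n$ at once.
\item Otherwise the symmetric core $A\cap(-A)$ has at least $(1-\epsilon_3)n$ elements, so the lemma applies to it. The event that some $Y_i$ leaves the core costs at most $3\epsilon_3/n$.
\end{itemize}
Balancing $C_1/(1-\epsilon_3)+3\epsilon_3=1-\epsilon_3$ gives $C_2=1-\epsilon_3$. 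So the reduction you set aside is essentially what is done. It uses $A\cap(-A)$ rather than $A\cup(-A)$, and the dichotomy makes that core large without changing $n$.

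\textbf{What each route buys.} Your approach shows that the symmetry hypothesis in the lemma is superfluous. With the nonnegative weights $|\hat f(k)|^2$ and phases $\theta_k$, the following all survive:
\begin{itemize}
\item Parseval;
\item the pointwise bound $1/n$;
\item the averaging/contradiction step, which needs only nonnegative weights and $p>2n$;
\item the counting for $B_1'$;
\item the Freiman step.
\end{itemize}
The paper's route is more modular. Yours is more economical and avoids the extra loss $\epsilon_3$. That loss, and not any adjustment of the arc, is why the paper's $C_2$ exceeds $C_1$.

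\textbf{The arc boundary is not an obstacle.} The shrinking by $\eta$ is unnecessary, though harmless. Since $p>2n\ge 4$ is prime, $p\ge 5$ and $p/3\notin\mathbb{Z}$. Hence a closed arc of real length $p/3$, in any position, contains at most $\lfloor p/3\rfloor+1$ residues. These lift to consecutive integers, so sums of three of them differ by at most $3\lfloor p/3\rfloor<p$. Two such sums therefore coincide modulo $p$ only if they coincide in $\mathbb{Z}$. The paper needs exactly this fact already in the symmetric case, where the arc containing $A_1$ is also shifted by an arbitrary $\bar x$. So your admissible region for $(\epsilon_1,\epsilon_2)$ is literally the lemma's, and you get $C_2=C_1$.

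\textbf{A detail inherited from the lemma.} In \eqref{split1}, the points $\bar x-v$ need not lie in $B$. When $B_2=\emptyset$, points outside $B$ may also have mass at least $(1-\epsilon_1)/n$, and the split does not account for them. The fix is to let $B_1$ be the full superlevel set $\{y:\mathbb{P}[Y_1+Y_2=y]\ge(1-\epsilon_1)/n\}$, which has at most $n/(1-\epsilon_1)$ elements, and to treat all other points like $B_2$. The counting then gives $|B_1\setminus B_1'|\le 4\epsilon_1|B_1|$, and the argument closes as before with slightly modified constants.
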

\proof
First of all, we note that, if $n=2$, we may suppose, without loss of generality, that $A=\{-1,1\}$. For this set, with the same proof of Theorem \ref{Z,3,epsilon}, we obtain that $\mathbb{P}[Y=x]\leq \frac{3+1/4}{4n}$ (which give a better constant than $\frac{C_2}{n}$). So, in the following, we only consider the case $n\geq 3$.

Let $\bar{x}$ be such that $\mathbb{P}[Y_1+Y_2=\bar{x}]$ is maximal. Since $\mathbb{Z}_p$ is a field, we may suppose, without loss of generality, that $\bar{x}=0$.
Set $\epsilon_3$ such that
$$\frac{C_1}{(1-\epsilon_{3})}+3\epsilon_{3}=1-\epsilon_{3}$$ and $C_2=1-\epsilon_3$. We note that this relation has a solution $\epsilon_3\in (0,1-C_1)$: indeed, in $0$ its left-hand side is smaller than its right-hand side, while in $1-C_1$ the opposite inequality holds. Also, we can compute, computationally, the values of $\epsilon_3$ and $C_2$, and note that $C_2<0.999986$. Then we divide the proof into two cases.

CASE 1: $\mathbb{P}[Y_1+Y_2=0]\geq \frac{1-\epsilon_{3}}{n}$.
Here we define
$$A_1:=\{x\in A:\ -x\in A\}.$$
We have that
$$\mathbb{P}[Y_1+Y_2=0]=\mathbb{P}[Y_1\in A_1]\cdot \mathbb{P}[Y_2=-Y_1]=\frac{|A_1|}{n^2}.$$
This implies that
\begin{equation}\label{sizeA1}|A_1|\geq n(1-\epsilon_{3}).\end{equation}
Note that, since $n\geq 3$, $|A_1|\geq 2$. Hence, due to Lemma \ref{UniformSym}, there exists $C_1$ such that
$$\mathbb{P}[Y=x|Y_1,Y_2,Y_3\in A_1]\leq \frac{C_1}{n(1-\epsilon_{3})}.$$
Here we have that
$$\mathbb{P}[Y=x|Y_1\not\in A_1]=\mathbb{P}[Y=x|Y_2\not\in A_1]=\mathbb{P}[Y=x|Y_3\not\in A_1],$$
and that $\mathbb{P}[Y=x|Y_1\not\in A_1]$ can be written as
$$\sum_{y\in \mathbb{Z}_p}\mathbb{P}[Y_1+Y_2=y|Y_1\not\in A_1]\mathbb{P}[Y_3=x-y]\leq \sum_{y\in \mathbb{Z}_p}\mathbb{P}[Y_1+Y_2=y|Y_1\not\in A_1] \frac{1}{n}=\frac{1}{n}.$$
Therefore, we can bound $\mathbb{P}[Y=x]$ as follows:
$$\mathbb{P}[Y=x]\leq \mathbb{P}[Y=x|Y_1,Y_2,Y_3\in A_1]+3\mathbb{P}[Y=x|Y_1\not\in A_1]\mathbb{P}[Y_1\not\in A_1]\leq$$
\begin{equation}\label{case1b}
\frac{C_1}{n(1-\epsilon_{3})}+\frac{3}{n}\frac{|A\setminus A_1|}{n}\leq \frac{C_1}{n(1-\epsilon_{3})}+3\frac{\epsilon_{3}}{n}=\frac{C_2}{n}
\end{equation}
which concludes CASE 1.

CASE 2: $\mathbb{P}[Y_1+Y_2=0]< \frac{1-\epsilon_{3}}{n}$. Here we have that
\begin{equation}\label{split3}
\mathbb{P}[Y=x]=\sum_{v_i\in A}\mathbb{P}[Y_1+Y_2=x-v_i]\mathbb{P}[Y_3=v_i].
\end{equation}
Since
$$\mathbb{P}[Y_1+Y_2=x-v_i]\leq \mathbb{P}[Y_1+Y_2=0]<\frac{1-\epsilon_{3}}{n},$$ Equation \eqref{split3} can be written as
\begin{equation}\label{case2b}
\mathbb{P}[Y=x]<\sum_{v_i\in A}\frac{1-\epsilon_{3}}{n}\mathbb{P}[Y_3=v_i]=\frac{1-\epsilon_{3}}{n}=\frac{C_2}{n}.
\end{equation}
\endproof
This proposition can be generalized to:
\begin{thm}\label{General3}
Let $\lambda\leq \frac{9}{10}$, $p> \frac{2}{\lambda}$, $Y_1,Y_2$ and $Y_3$ be identical and independent variables such that $$\left(\max_{x\in \mathbb{Z}_p} \mathbb{P}[Y_1=x]\right)\leq \lambda.$$ Then there exists $C_3<1$\footnote{Here the best approximation we have for this constant is $C_3< 1-1.3\cdot 10^{-12}.$}, such that, set $Y=Y_1+Y_2+Y_3$ we have:
$$\left(\max_{x\in \mathbb{Z}_p} \mathbb{P}[Y=x]\right)\leq C_3\lambda.$$
\end{thm}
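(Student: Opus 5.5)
We would follow the same two-stage architecture as Lemma \ref{UniformSym} and Proposition \ref{Uniform}, now for a general distribution $f(x)=\mathbb{P}[Y_1=x]$ with $\|f\|_\infty\le\lambda$. The quantity $1/n$ is replaced throughout by $\lambda$; the hypothesis $p>2/\lambda$ plays the role of $p>2n$. We also use $\|f\|_2^2=\sum_x f(x)^2\le\lambda$, so that $\mathbb{P}[Y_1+Y_2=x]\le\lambda$ for every $x$ by Cauchy--Schwarz. This bound is the replacement of \eqref{val0}. We first treat a symmetric distribution $f(x)=f(-x)$, and then reduce the general case to it.

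\textbf{Symmetric case.} Here $\hat f$ is real, and $\mathbb{P}[Y_1+Y_2=x]=\frac1p\sum_k\cos(2\pi xk/p)\hat f(k)^2$ with $\frac1p\sum_k\hat f(k)^2=\|f\|_2^2\le\lambda$. Define the set $B_1$ of $x$ with $\mathbb{P}[Y_1+Y_2=x]\ge(1-\epsilon_1)\lambda$; since $f*f$ has total mass $1$, $|B_1|\le 1/((1-\epsilon_1)\lambda)$. There are then two cases.
\begin{enumerate}[(i)]
\item Suppose the mass $\mathbb{P}[Y_3\in x-B_1]$ is at most $1-\epsilon_2$ for every $x$. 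Then, exactly as in CASE 2 of Lemma \ref{UniformSym}, $\mathbb{P}[Y=x]\le\lambda(1-\epsilon_2)+\lambda\epsilon_2(1-\epsilon_1)$.
\item Otherwise, averaging the Fourier identity over $B_1$ gives $\frac1p\sum_{k\neq0}\hat f(k)^2\ge\lambda-\frac1p>0$, using $p>2/\lambda$, and hence a frequency $\bar k\neq0$ as in \eqref{bark}. Rescaling $\bar k$ to $1$, most of $B_1$ lies in an arc of length $p/3$. Just as before, this forces most of the mass of $Y_3$, say $1-O(\epsilon_1+\epsilon_2)$, to lie in an arc $A_1$ of length $p/3$, on which sums of three elements do not wrap around modulo $p$. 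Conditioning on $Y_1,Y_2,Y_3\in A_1$ reduces the problem to the integers. The case $\mathbb{P}[Y_i\notin A_1]$ contributes at most $3\lambda\cdot O(\epsilon_1+\epsilon_2)$.
\end{enumerate}

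\textbf{Integer ingredient (main obstacle).} In case (ii) we need an integer analogue of Theorem \ref{Z,3,epsilon} for non-uniform distributions bounded by $\lambda'$, giving $\max\mathbb{P}\le c\lambda'$ with an explicit $c<1$. Theorem \ref{LRthm} does not apply directly. We would try first a rearrangement argument: write $f$ as a mixture of uniform distributions on level sets (the layer-cake decomposition $f=\sum_j w_j\,\mathrm{Unif}(S_j)$ with $|S_j|\ge 1/\lambda'$). This step is the main obstacle. Trilinearity gives $\mathbb{P}[Y=x]=\sum w_iw_jw_k\,\mathbb{P}[U_i+U_j+U_k=x]$ with mixed supports, so one needs a Theorem \ref{Z,3,epsilon}-type count for $S_i+S_j+S_k$ with distinct sets. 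By the Riesz--Leader--Radcliffe rearrangement this reduces to three centred intervals, where an explicit count bounds the middle-coefficient by $\max(\text{smallest }1/|S|)\cdot c$ unless two of the sets are tiny. Alternatively, an extremal argument shows the point mass near $1$ is the only way to approach the trivial bound, and $\lambda\le 9/10$ excludes it. This is exactly where $\lambda\le 9/10$ enters: a large atom would otherwise let $\mathbb{P}[Y=x]$ approach $\lambda^3$ times nothing useful. Once this ingredient is in hand, choosing small $\epsilon_1,\epsilon_2$ as in Lemma \ref{UniformSym} gives a symmetric constant $C_1'<1$.

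\textbf{General case.} We mimic Proposition \ref{Uniform}, translating so that $\max_x\mathbb{P}[Y_1+Y_2=x]$ is attained at $0$. If this maximum is below $(1-\epsilon_3)\lambda$, we conclude directly by convolving with $Y_3$. If instead $\sum_x f(x)f(-x)\ge(1-\epsilon_3)\lambda$, then, since $\sum_x f(x)f(-x)\le\sum_x f(x)\min(f(x),f(-x))\cdot\frac{\lambda}{\cdots}$, the distribution $f$ is close in $\ell_1$ to its symmetric part $g=\min(f(x),f(-x))$, which has mass at least $1-\epsilon_3$ by the same kind of estimate. Decompose $f=g+r$ with $\|r\|_1\le\epsilon_3$. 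Expanding $(g+r)^{*3}$ then gives $\mathbb{P}[Y=x]\le C_1'\lambda/(1-\epsilon_3)+3\epsilon_3\lambda$, where the first term applies the symmetric case to $g/\|g\|_1$. Solving for $\epsilon_3$ as in Proposition \ref{Uniform} yields $C_3<1$, which will be tiny in its gap $1-C_3$, consistent with the footnote.
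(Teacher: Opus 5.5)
Most of your scaffolding carries over to $\lambda$-bounded distributions. Case (i) is correct, and the symmetrization works via $\sum_x f(x)f(-x)\le\lambda\sum_x\min(f(x),f(-x))$. The frequency extraction also survives with $p>2/\lambda$. Your displayed lower bound is a slip, since $\frac1p\sum_k\hat f(k)^2=\|f\|_2^2\le\lambda$. Argue instead that if every average cosine were below $1-2\epsilon_1$, the average of $f*f$ over $B_1$ would be at most $\frac{2\epsilon_1}{p}+(1-2\epsilon_1)\lambda<(1-\epsilon_1)\lambda$.

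The genuine gap is the step you yourself flag. After conditioning on the arc, you need $\max_x\mathbb{P}[Z_1+Z_2+Z_3=x]\le c\lambda'$ with an absolute $c<1$ for i.i.d.\ \emph{non-uniform} integer variables bounded by $\lambda'$. Nothing in the paper provides this. Theorems \ref{LRthm} and \ref{Z,3,epsilon} concern i.i.d.\ copies of a single uniform distribution, so even reducing mixed supports to intervals needs a separate discrete rearrangement inequality. Your sketch then relies on a claim that fails as stated. For symmetric intervals of odd sizes $a\le b\le c$ with $c\ge a+b-1$, one has $\mathbb{P}[U_a+U_b+U_c=0]=1/c$ exactly (e.g.\ sizes $k,k,2k-1$ for odd $k$). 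So mixed triples can give no gain over $\min 1/|S|$ although no set is tiny. Any gain must come from a quantitative interplay between the diagonal terms and the spread of the layer sizes, which you do not carry out, and the extremal alternative is only asserted. As written, the symmetric case, and hence the theorem, is not proved.

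The missing idea, which is how the paper argues, needs no new Fourier or integer analysis. Compare the $\lambda$-bounded distribution with the uniform distribution on its heavy atoms, and reduce directly to Proposition \ref{Uniform}. Put $A_1=\{x:\ \mathbb{P}[Y_1=x]\ge(1-\epsilon_4)\lambda\}$.

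If $|A_1|\ge(1-\epsilon_5)/\lambda$, then $|A_1|\ge 2$ when $\lambda\le 9/10$ and $\epsilon_5<1/10$; this is where $9/10$ is used. Given $Y_1\in A_1$, every atom has conditional mass at most $\frac{1}{(1-\epsilon_4)|A_1|}$. Hence $\mathbb{P}[Y=x\mid Y_1,Y_2,Y_3\in A_1]\le(1-\epsilon_4)^{-3}\mathbb{P}[\tilde Y=x]\le(1-\epsilon_4)^{-3}(1-\epsilon_5)^{-1}C_2\lambda$, where $\tilde Y$ is a sum of three uniforms on $A_1$. The event that some $Y_i\notin A_1$ costs at most $3\lambda(\epsilon_4+\epsilon_5)$.

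If $|A_1|<(1-\epsilon_5)/\lambda$, the light atoms carry mass more than $\epsilon_5$. Since $A_1^c$ consists of the $|A_1^c|$ smallest atoms, every translate $x-A_1^c$ carries at least as much. So $\mathbb{P}[Y_1+Y_2=x]\le\lambda(1-\epsilon_4\epsilon_5)$ for all $x$, and convolving with $Y_3$ preserves this.

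The same dichotomy, run in $\mathbb{Z}$ against Theorem \ref{Z,3,epsilon}, would also close your integer gap. But once you have it, the detour through symmetric non-uniform $f$ is unnecessary.
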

\proof
We consider positive $\epsilon_{4}$ and $\epsilon_5$ which satisfy the relation:
$$1-\epsilon_4\epsilon_5\geq 3(\epsilon_4+\epsilon_5)+\frac{C_2}{(1-\epsilon_4)^3(1-\epsilon_5)}$$
and we set $C_3:=1-\epsilon_4\epsilon_5$. We will also assume that $\epsilon_5<1/10$ and $\epsilon_4<1$. It is clear that, since $(0,0)$ satisfies the above relation (with a strong inequality), the set of pairs subject to these constraints is nonempty.

Then, we define
$$A_1:=\{x\in \mathbb{Z}_p: \mathbb{P}[Y_1=x]\geq \lambda(1-\epsilon_{4})\}.$$
Then we divide the proof into two cases according to the cardinality of $A_1$.

CASE 1: $|A_1|\geq \frac{(1-\epsilon_{5})}{\lambda}.$
We want to provide an upper bound to $\mathbb{P}[Y=x|Y_1,Y_2,Y_3\in A_1]$ by applying Proposition \ref{Uniform}. For this purpose, we note that:
$$\mathbb{P}[Y=x|Y_1,Y_2,Y_3\in A_1]=$$
\begin{equation}\label{Stima1}
\sum_{y_1,y_2\in A_1}\mathbb{P}[Y_1=y_1|Y_1\in A_1]\mathbb{P}[Y_2=y_2|Y_2\in A_1]\mathbb{P}[Y_3=x-y_1-y_2|Y_3\in A_1].
\end{equation}
Here, for $y_1\in A_1$, we have
$$
\mathbb{P}[Y_1=y_1|Y_1\in A_1]\mathbb{P}[Y_1\in A_1]=\mathbb{P}[Y_1=y_1]\leq \lambda.
$$
Since
$$\mathbb{P}[Y_1\in A_1]=\sum_{x\in A_1} \mathbb{P}[Y_1=x]\geq |A_1| \lambda (1-\epsilon_{4}),$$
it follows that
$$
\mathbb{P}[Y_1=y_1|Y_1\in A_1]\leq \frac{\lambda}{\mathbb{P}[Y_1\in A_1]}\leq \frac{1}{|A_1|(1-\epsilon_{4})}.
$$
Note that, named by $\tilde{Y}=\tilde{Y_1}+\tilde{Y_2}+\tilde{Y_3}$ where $\tilde{Y_1}, \tilde{Y_2}, \tilde{Y_3}$ are uniform distribution over $A_1$, we have that
\begin{equation}\label{Bound3}
\mathbb{P}[Y_1=y_1|Y_1\in A_1] \leq \frac{1}{|A_1|(1-\epsilon_4)}=\frac{1}{(1-\epsilon_4)}\mathbb{P}[\tilde{Y_1}=y_1].\end{equation}
Noting that $\mathbb{P}[\tilde{Y_1}=y_1]\leq \frac{\lambda}{(1-\epsilon_5)}$, Equations \eqref{Stima1} and \eqref{Bound3} imply that
\begin{equation}\label{Bound4}
\mathbb{P}[Y=x|Y_1,Y_2,Y_3\in A_1]\leq \left(\frac{1}{1-\epsilon_4}\right)^3\mathbb{P}[\tilde{Y}=x]\leq \left(\frac{1}{1-\epsilon_4}\right)^3 \frac{C_2\lambda}{(1-\epsilon_{5})}.
\end{equation}
Here the last inequality holds because, since $\lambda\leq 9/10$, $\epsilon_5<1/10$, and we are in CASE 1, we have $|A_1|>1$. Moreover, since $|A_1|$ is integer, this means that $|A_1|\geq 2$ and we can apply Proposition \ref{Uniform} to the distribution $\tilde{Y}$.

Recalling that $$\mathbb{P}[Y_1\in A_1]\geq |A_1| \lambda (1-\epsilon_{4})\geq (1-\epsilon_5)(1-\epsilon_4)>(1-\epsilon_4-\epsilon_5),$$
we have $\mathbb{P}[Y_1\not\in A_1]\leq \epsilon_{4}+\epsilon_5$.
Proceeding as in Equation \eqref{case1b}, it follows that $$\mathbb{P}[Y=x]\leq \mathbb{P}[Y=x|Y_1,Y_2,Y_3\in A_1]+3\mathbb{P}[Y=x|Y_1\not\in A_1]\mathbb{P}[Y_1\not\in A_1]\leq$$
\begin{equation}\label{case1c}
\left(\frac{1}{1-\epsilon_4}\right)^3 \frac{C_2\lambda}{(1-\epsilon_{5})}+3\lambda\mathbb{P}[Y_1\not\in A_1]\leq \left(\frac{1}{1-\epsilon_4}\right)^3 \frac{C_2\lambda}{(1-\epsilon_{5})}+3\lambda(\epsilon_{4}+\epsilon_5)\leq C_3 \lambda
\end{equation}
which concludes CASE 1.

CASE 2: $|A_1|< \frac{(1-\epsilon_{5})}{\lambda}.$
Here we prove that $\mathbb{P}[Y_1+Y_2=x]\leq C_3 \lambda.$
Indeed, we have that
$$\mathbb{P}[Y_1+Y_2=x]=\sum_{y_1\in A_1}\mathbb{P}[Y_1=y_1]\mathbb{P}[Y_2=x-y_1]+\sum_{y_1\not\in A_1}\mathbb{P}[Y_1=y_1]\mathbb{P}[Y_2=x-y_1].$$
Here we note that
\begin{equation}
\mathbb{P}[Y_1=y_1]\leq \begin{cases}
\lambda \mbox{ if } y_1\in A_1;\\
\lambda(1-\epsilon_{4}) \mbox{ if } y_1\not\in A_1.
\end{cases}
\end{equation}
Therefore
$$\mathbb{P}[Y_1+Y_2=x]\leq \lambda \left(\sum_{y_1\in A_1}\mathbb{P}[Y_2=x-y_1]+ (1-\epsilon_{4})\sum_{y_1\not\in A_1}\mathbb{P}[Y_2=x-y_1]\right)=$$
\begin{equation}\label{case2c}
=\lambda - \lambda\epsilon_{4}\sum_{y_1\not\in A_1}\mathbb{P}[Y_2=x-y_1].
\end{equation}
Now we note that, named $A_2:=\{y_1\in \mathbb{Z}_p:\ y_1\not \in A_1\}$, $|A_2|\geq p-\frac{(1-\epsilon_{5})}{\lambda}$ and we have that
$$\mathbb{P}[Y_2\in A_2]=1- \mathbb{P}[Y_2\in A_1]\geq 1- \lambda|A_1|\geq \epsilon_5.$$
Also, given another set $B$ of the same cardinality, we have that
$$\mathbb{P}[Y_2\in B]\geq \mathbb{P}[Y_2\in A_2]\geq \epsilon_{5}.$$
Therefore, considering that $|\{z\in \mathbb{Z}_p:\ z=x-y_1,\ y_1\not\in A_1\}|=|A_2|$, Equation \eqref{case2c} can be written as
$$\mathbb{P}[Y_1+Y_2=x]\leq\lambda-\lambda\epsilon_4\mathbb{P}[Y_2\in\{z\in \mathbb{Z}_p:\ z=x-y_1,\ y_1\not\in A_1\}]\leq \lambda - \lambda\epsilon_{4}\epsilon_{5}=C_3\lambda.$$
Now it is enough to note that, since $Y_1,Y_2$ and $Y_3$ are independent,
$$\mathbb{P}[Y_1+Y_2+Y_3=x]=\sum_{y_3\in \mathbb{Z}_p}\mathbb{P}[Y_3=y_3]\mathbb{P}[Y_1+Y_2=x-y_3]\leq \sum_{y_3\in \mathbb{Z}_p}\mathbb{P}[Y_3=y_3]C_3\lambda\leq C_3\lambda$$
which concludes CASE 2.

The thesis follows showing, with Mathematica, that we can choose $C_3< 1-2.27\cdot 10^{-12}.$
\endproof
\begin{rem}
In Theorem \ref{General3}, we may also weaken the hypothesis that $\lambda<9/10$ and assume that $\lambda<1$. With the same proof, choosing $\epsilon_5<1-\lambda$, we find the existence of a constant $C_{3,\lambda}<1$ also for values of $\lambda$ that are close to one. On the other hand, this constant cannot be made explicit, and it depends on $\lambda$. So, to obtain an absolute constant, we prefer to assume $\lambda$ smaller than a given value (i.e. $9/10$).
\end{rem}
As a consequence, we can state the following result, which is analogous, for the case of $\mathbb{Z}_p$, to that of the previous section.
\begin{thm}\label{Generalell}
Let us consider $\lambda\leq \frac{9}{10}$, $\epsilon>0$, $p>\frac{2}{\lambda\epsilon}$ and let $Y=Y_1+\dots+Y_{\ell}$ where $Y_i$ are i.i.d. whose distributions are upper-bounded by $\lambda$.
Then, if $\ell$ is sufficiently large with respect to $\epsilon$, we have that
\begin{equation}\label{LastEq}\left(\max_{x\in \mathbb{Z}_p} \mathbb{P}[Y=x]\right)\leq \epsilon \lambda.\end{equation}
In particular, if $k_0$ is a positive integer such that $(C_3)^{k_0}<\epsilon\leq (C_3)^{k_0-1}$ and $\ell_0=3^{k_0}$, Equation \eqref{LastEq} holds for any $\ell\geq \ell_0$.
\end{thm}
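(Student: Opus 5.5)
Iterate Theorem~\ref{General3} along a tower of powers of three. Write $\ell_0=3^{k_0}$ and set $\lambda_j:=C_3^{\,j}\lambda$ for $j=0,\dots,k_0$. The claim to prove by induction on $j$ is: if $Z$ is a sum of $3^j$ i.i.d.\ copies of $Y_1$, then $\max_x \mathbb{P}[Z=x]\le \lambda_j$. The case $j=0$ is the hypothesis.

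For the inductive step, split a sum of $3^{j+1}$ copies into three independent blocks $Z^{(1)},Z^{(2)},Z^{(3)}$, each the sum of $3^j$ copies. These blocks are i.i.d., and by induction their distributions are bounded by $\lambda_j$. To apply Theorem~\ref{General3} with $\lambda_j$ in place of $\lambda$ we need two things. First, $\lambda_j\le\lambda\le 9/10$, which holds since $C_3<1$. Second, $p>2/\lambda_j$, which follows from $p>\frac{2}{\lambda\epsilon}$ together with $\lambda_j=C_3^j\lambda\ge C_3^{k_0-1}\lambda\ge \epsilon\lambda$ for $j\le k_0-1$; this uses $\epsilon\le C_3^{k_0-1}$. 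We are only ever applying the theorem to pass from level $j$ to level $j+1$ with $j\le k_0-1$, so the condition is always met. The theorem then gives the bound $C_3\lambda_j=\lambda_{j+1}$. At $j=k_0$ we obtain $\max_x\mathbb{P}[Y_1+\dots+Y_{\ell_0}=x]\le C_3^{k_0}\lambda<\epsilon\lambda$.

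To pass from $\ell_0$ to arbitrary $\ell\ge\ell_0$, write $Y=S+R$, where $S$ is the sum of the first $\ell_0$ variables and $R$ is the sum of the remaining ones, independent of $S$. Then
$$\mathbb{P}[Y=x]=\sum_{r}\mathbb{P}[R=r]\,\mathbb{P}[S=x-r]\le \max_z\mathbb{P}[S=z]\le\epsilon\lambda.$$
This is the same monotonicity argument used at the end of CASE 2 of Theorem~\ref{General3}. The first assertion, that the bound holds for $\ell$ sufficiently large, follows because $C_3^{k}\to0$, so a suitable $k_0$ always exists.

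The main point requiring care is checking the hypothesis $p>2/\lambda_j$ at every level of the induction. That hypothesis is exactly why the statement assumes $p>2/(\lambda\epsilon)$ and chooses $k_0$ with $\epsilon\le C_3^{k_0-1}$. A secondary point is that Theorem~\ref{General3} needs identically distributed summands. This holds because the three blocks are sums of equal numbers of i.i.d.\ variables.
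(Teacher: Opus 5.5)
Your proposal is correct and follows essentially the same route as the paper. Both arguments induct along powers of three, applying Theorem~\ref{General3} to three i.i.d.\ blocks with $\lambda$ replaced by $C_3^{j}\lambda$; both use $\epsilon\le C_3^{k_0-1}$ to guarantee $p>2/(C_3^{j}\lambda)$ at every level $j\le k_0-1$, and both pass from $\ell_0$ to any $\ell\ge\ell_0$ by the convolution bound. You are also slightly more explicit than the paper, which leaves implicit your checks that $C_3^{j}\lambda\le 9/10$ and that the blocks are identically distributed.
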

\proof
First of all, we prove, by induction on $k$, that, assuming $p>\frac{2}{C_3^{k-1}\lambda}$,
\begin{equation}\label{induction}\left(\max_{x\in \mathbb{Z}} \mathbb{P}[Y_1+Y_2+\dots+Y_{3^k}=x]\right)\leq C_3^k\lambda.\end{equation}
BASE CASE. The case $k=1$ follows from Theorem \ref{General3}.

INDUCTIVE STEP. We assume $p>\frac{2}{C_3^{k}\lambda}$ and that Equation \eqref{induction} is true for $k$ and we prove it for $k+1$.
At this purpose we set $\tilde{Y}_1=Y_1+Y_2+\dots+Y_{3^k}$ and we note that, set $\tilde{\lambda}=C_3^k\lambda$ for the inductive hypothesis,
$$\mathbb{P}[\tilde{Y}_1=x]\leq \tilde{\lambda}.$$
Therefore, because of Theorem \ref{General3}, we see that if $\tilde{Y}_2$, $\tilde{Y}_3$, are three identical copies of $\tilde{Y_1}$ and the set $Y= \tilde{Y}_1 +\tilde{Y}_2+\tilde{Y}_3$,
$$\mathbb{P}[Y=x]\leq C_3\tilde{\lambda}=C_3^{k+1}\lambda.$$
The inductive claim follows since $Y= \tilde{Y}_1 +\tilde{Y}_2+\tilde{Y}_3=Y_1+Y_2+\dots+Y_{3^{k+1}}$.

Now we consider $\ell \geq 3^k$. A bound for this case follows, since we have
$$\mathbb{P}[Y=x]=\sum_{y_3}\mathbb{P}[Y_{3^k+1}+Y_{3^k+2}+\dots+Y_{\ell}=y]\mathbb{P}[Y_1+Y_2+\dots +Y_{3^k}=x-y]\leq$$
$$ \sum_{y}\mathbb{P}[Y_{3^k+1}+Y_{3^k+2}+\dots+Y_{\ell}=y] C_3^k\lambda\leq C_3^k\lambda.$$

Now, define $k_0$ to be a positive integer such that $(C_3)^{k_0}<\epsilon\leq (C_3)^{k_0-1}$ and let $\ell_0=3^{k_0}$. Since 
$$p>\frac{2}{\epsilon\lambda}>\frac{2}{\lambda C_3^{k_0-1}},$$
Equation \eqref{LastEq} holds for any $\ell\geq \ell_0$ and the main claim of the theorem follows.
\endproof
\begin{rem}
In Theorem \ref{Generalell}, we may also weaken the hypothesis that $\lambda<9/10$ and assume that $\lambda<1$. With the same proof, we find the existence of a constant $C_{3,\lambda}<1$ for which the following statement holds. 

Let us consider $\epsilon>0$, $p>\frac{2}{\lambda\epsilon}$ and let $Y=Y_1+\dots+Y_{\ell}$ where $Y_i$ are i.i.d. whose distributions are upper-bounded by $\lambda$.
Then, if $\ell$ is sufficiently large with respect to $\epsilon$, we have that
\begin{equation}\label{LastEq2}\left(\max_{x\in \mathbb{Z}_p} \mathbb{P}[Y=x]\right)\leq \epsilon \lambda.\end{equation}
In particular, if $k_0$ is a positive integer such that $(C_{3,\lambda})^{k_0}<\epsilon\leq (C_{3,\lambda})^{k_0-1}$ and $\ell_0=3^{k_0}$, Equation \eqref{LastEq2} holds for any $\ell\geq \ell_0$.
\end{rem}

\subsection{Conclusive Remarks}
The importance of this result is that it is nontrivial also for small values of $\ell\geq 3$ provided that $\epsilon$ is sufficiently close to, but still below, $1$. We recall that, on the other hand, the bound of Corollary \ref{LevBound} is worse that $1/n$ whenever $\ell<24$.

With our bound (assuming $p$ and $\lambda$ satisfy the hypotheses of Theorem \ref{Generalell}), if $3\leq\ell<9$ we obtain that
$$\left(\max_{x\in \mathbb{Z}_p} \mathbb{P}[Y=x]\right)\leq C_3 \lambda$$
while, if $9\leq \ell <27,$
$$\left(\max_{x\in \mathbb{Z}_p} \mathbb{P}[Y=x]\right)\leq (C_3)^2 \lambda.$$

Moreover, set $\nu=-\log_3{C_3}$, Theorem \ref{Generalell} can be restated as follows.

Let us consider $\lambda\leq \frac{9}{10}$, $p> \frac{2}{\lambda}\left(\frac{\ell_0}{3}\right)^{\nu}$, where $\ell_0$ is a power of three and $\ell\geq \ell_0$, and let $Y=Y_1+\dots+Y_{\ell}$ where $Y_i$ are i.i.d. whose distributions are upper-bounded by $\lambda$.
Then, the distribution $Y$ is bounded by $\lambda\left(\frac{3}{\ell_0}\right)^{\nu}$, where $\nu$ is a positive absolute constant.

From this discussion, we can also estimate that $\nu>1.19\cdot 10^{-12}$. The value obtained here (that we do not expect to be tight) is a result of the universal nature of our estimations which must hold for any distribution bounded by $9/10$. While $\nu$ is a positive absolute constant (which makes this result nontrivial), we expect that it would be possible to improve this value for specific classes of distributions.
\section*{Acknowledgements} The author would like to thank Stefano Della Fiore for our useful discussions on this
topic. The author was partially supported by INdAM--GNSAGA.

\end{document}